      \newcommand {\al}   {\alpha}          \newcommand {\bt}  {\beta}
      \newcommand {\gam } {\gamma}          
      \newcommand {\del}  {\delta}          \newcommand {\Del} {\Delta}
              \newcommand {\ve}   {\varepsilon}
      \newcommand {\om}   {\omega}          \newcommand {\Om}  {\Omega}
      \newcommand {\pl}   {\partial}        \newcommand {\s}    {\sigma}
           \newcommand {\UUU}  {{\cal U}}
      \newcommand {\RRR}  {{\mathbb R}}     
      \newcommand {\OOO}  {{\cal O}}        
      \newcommand {\ZZZ}  {{\mathbb Z}}     
             \newcommand {\kap}  {\kappa}
      \newcommand {\FFF}  {{\cal F}}
      \newcommand {\bbss}  {\begin{slide}}
      \newcommand {\eess}  {\end{slide}}
             \newcommand {\bbb}{s}
\DeclareMathOperator{\graph}{graph}
      \newtheorem{theorem}{Theorem}
      \newtheorem{lemma}{Lemma}
     \newtheorem{proposition}{Proposition}
      \newtheorem{corollary}{Corollary}
       \newtheorem{remark}{Remark}
\newcounter{fig}
\renewcommand{\figure}{\refstepcounter{fig}%
                  Fig. \arabic{fig}. }
\title{Minimal resistance of curves under the single impact assumption}
\author{Arseniy Akopyan\thanks{Dept. of Mathematics, Moscow Institute of Physics and Technology, Institutskiy per. 9, Dolgoprudny, Russia 141700 and Institute for Information Transmission Problems RAS, Bolshoy Karetny per. 19, Moscow, Russia 127994} \and Alexander Plakhov\thanks{Center for R\&{}D in Mathematics and Applications, Department of Mathematics, University of Aveiro, Portugal and Institute for Information Transmission Problems, Moscow, Russia}}
\begin{document}
\maketitle

\begin{abstract}
We consider the hollow on the half-plane $\{ (x,y) : y \le 0 \} \subset \RRR^2$ defined by a function $u : (-1,\, 1) \to \RRR$,\, $u(x) < 0$ and a vertical flow of point particles incident on the hollow. It is assumed that $u$ satisfies the so-called single impact condition (SIC): each incident particle is elastically reflected by $\graph(u)$ and goes away without hitting the graph of $u$ anymore. We solve the problem: find the function $u$ minimizing the force of resistance created by the flow. We show that the graph of the minimizer is formed by two arcs of parabolas symmetric to each other with respect to the $y$-axis. Assuming that the resistance of $u \equiv 0$ equals $1$, we show that the minimal resistance equals $\pi/2 - 2\arctan(1/2) \approx 0.6435$. This result completes the previously obtained result \cite{PkakSIAM} stating in particular that the minimal resistance of a hollow in higher dimensions equals $0.5$.

{ We additionally consider a similar problem of minimal resistance, where the hollow in the half-space $\{(x_1,\ldots,x_d, y) : y \le 0 \} \subset \RRR^{d+1}$ is defined by a radial function $U$ satisfying SIC, $U(x) = u(|x|)$, with $x = (x_1,\ldots,x_d)$,\, $u(\xi) < 0$ for $0 \le \xi < 1$ and $u(\xi) = 0$ for $\xi \ge 1$, and the flow is parallel to the $y$-axis. The minimal resistance is greater than $0.5$ (and coincides with $0.6435$ when $d = 1$) and converges to $0.5$ as $d \to \infty$.}
\end{abstract}

\begin{quote}
{\small {\bf Mathematics subject classifications:} 49Q10, 49K30}
\end{quote}

\begin{quote}
{\small {\bf Key words and phrases:} Newton's problem, body of minimal resistance, shape optimization, single impact condition.}
\end{quote}

\section{Introduction}

Consider a function $u : \bar\Om \to \RRR$, where $\Om \subset \RRR^d$,\ $d \ge 1$ is an open connected bounded set. We assume that the gradient $\nabla u(x)$  exists and is continuous on an open full-measure subset of $\Om$, and
\begin{equation}\label{eq1}
u(x) = 0 \ \ \text{for}\ \, x \in \pl\Om \quad \text{and} \quad u (x) \le 0 \ \ \text{for}\ \, x \in \Om.
\end{equation}
Consider a parallel flow of point particles in $\RRR^{d+1}$ incident on the graph of $u$ with the velocity $v = (0, \ldots, 0,\, -1)$. That is, the flow is parallel to the $(d+1)$th coordinate axis and is directed ``downward''. If a particle hits the graph at a regular point, it is reflected according to the billiard law: {\it the angle of reflection equals the angle of incidence}. Taking into account that the law of reflection reads as $v^+ = v - 2\langle v,\, n \rangle n$, where $v^+$ is the velocity of the reflected particle, $n$ is the unit normal to the surface at the reflection point, and $\langle \cdot\,, \cdot \rangle$ indicates the scalar product, one easily calculates the velocity $v^+(x)$ of the particle after the reflection at a regular point $(x, u(x))$ of the graph. We have $n(x) = (-\nabla u(x), 1)/\sqrt{1 + |\nabla u(x)|^2}$ and
$$
v^+(x) = v - 2\langle v,\, n(x) \rangle n(x) = \frac{(-2\nabla u(x), \ 1 - |\nabla u(x)|^2)}{1 + |\nabla u(x)|^2}.
$$
This implies that after the reflection the particle moves along the ray
$$
(x - 2t \nabla u(x), \ u(x) + t(1 - |\nabla u(x)|^2)), \quad t \ge 0.
$$

We require that the ray lies above the graph of $u$, and therefore the particle does not hit $\graph(u)$ anymore. If this requirement is satisfied for all reflected rays, we say that $u$ satisfies the single impact condition (SIC). This condition can be stated analytically as follows: for all regular $x \in \Om$ and all $t \ge 0$ such that $x - 2t \nabla u(x) \in \bar\Om$,
\begin{equation}\label{eq2}
u(x - 2t \nabla u(x)) \le u(x) + t(1 - |\nabla u(x)|^2).
\end{equation}
A function $u$ is called {\it admissible}, if it satisfies conditions \eqref{eq1} and \eqref{eq2}.

\begin{remark}\label{zam1}
Suppose that $\nabla u(x) \ne 0$. Since $x - 2t \nabla u(x)$ lies in $\Om$ for $t = 0$ and in $\RRR^d \setminus \Om$ for $t$ sufficiently large, for a certain $t_0 > 0$ we have $x - 2t_0 \nabla u(x) \in \pl\Om$, and therefore \mbox{$u(x - 2t_0 \nabla u(x)) = 0$}. By \eqref{eq2} we have $u(x) + t_0 (1 - |\nabla u(x)|^2) \ge 0$; hence $|\nabla u(x)| < 1$. Thus we come to the following {\it necessary condition}:
\begin{quote}
If $u$ is admissible then $|\nabla u(x)| < 1$ at each regular point $x \in \Om$.
\end{quote}
That is, the inclination angle of $\graph(u)$ is everywhere smaller than $45^\circ$.
\end{remark}

\begin{center}
	\centering
		\includegraphics[scale=1]{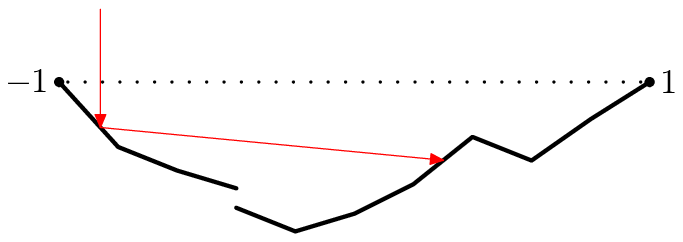}
		
	\figure The slope is greater than $45^\circ$
	\label{fig:slope greater than 45}
\end{center}

\begin{remark}\label{zam2}
One can easily derive the following {\it sufficient condition}:
\begin{quote}
If $u$ is continuous and $|\nabla u(x)| \le 1/\sqrt 3$ at each regular $x \in \Om$, then $u$ is admissible.
\end{quote}
It means that the inclination angle of $\graph(u)$ is everywhere smaller than or equal to $30^\circ$.

Indeed, if the condition is fulfilled then for $t \ge 0$
$$
|u(x - 2t \nabla u(x)) - u(x)| \le \sup_{\xi\in\Om} |\nabla u(\xi)| \cdot |2t \nabla u(x)| \le 2t/3 \text{ and } t(1 - |\nabla u(x)|^2) \ge 2t/3;
$$
hence \eqref{eq2} is true.
\end{remark}

\begin{center}
	\centering
		\includegraphics[scale=1]{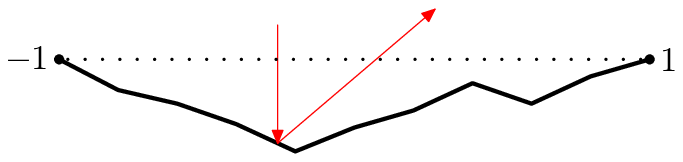}
		
	\figure A continuous function with the slope less than $30^\circ$.
	\label{fig:slope less than 30}
\end{center}

\begin{remark}\label{zam3}
One can have in mind the ``hollow'' in the $(d+1)$-dimensional half-space $\{ (x,z) \in \RRR^{d+1} : z \le 0 \}$ defined by
$$
\{ (x,z) \in \RRR^{d+1} :\, z \le u(x)\, \text{ if } x \in \Om \text{ and } z \le 0\, \text{ if } x \in \RRR^d \setminus \Om \}.
$$
The incident flow of particles is perpendicular to the hyperplane bounding the half-space. Each particle coming in the hollow hits it only once and then goes away. The particles do not mutually interact.
\end{remark}

We assume that the density $\rho$ of the flow is constant; then the force exerted by the flow on the hollow (usually called the {\it force of resistance}) equals $2\rho |\Om| (F_0,\, -F)$, where $|\Om|$ means the $d$-dimensional area of $\Om$,
$$
F_0 = F_0(u;\Om) = \frac{1}{|\Om|} \int_\Om \frac{\nabla u(x)}{1 + |\nabla u(x)|^2}\, dx,
 $$
and
 \begin{equation}\label{F}
 F = F(u;\Om) = \frac{1}{|\Om|} \int_\Om \frac{dx}{1 + |\nabla u(x)|^2}.
\end{equation}

We are concerned with the problem of minimizing the component of the force along the flow direction. Thus, the problem is as follows: given $\Om \subset \RRR^d$, find
\begin{equation}\label{eq3}
\inf_u F(u;\Om),
\end{equation}
where the infimum is taken over all admissible functions. The functional $F(u;\Om)$ is also called the {\it resistance} of $u$.

The problems of minimal resistance have a long history. The first problem of this kind was considered by Newton in his {\it Principia} (1687). In modern terms it can be formulated as minimizing the resistance $F(u;\Om)$ in the class of concave radially symmetric functions $u: \Om \to [0,\, h]$, where $\Om \subset \RRR^2$ is a unit circle and $h > 0$ is the parameter of the problem. (Note that all functions in this class satisfy SIC.) One can imagine a convex body bounded above by $\graph(u)$ and below by the horizontal plane $z = 0$; one is interested in finding the most streamlined shape of the body.

In the last two decades problems of minimal resistance in various classes of functions (usually wider than the class considered by Newton) have been studied \cite{BrFK,BB,Bsurvey,BFK,BK,CL1,CL2,Kawohl,LO,LP1}. The problem of minimizing the resistance of a hollow was first stated in \cite{CL3} in the particular case $d = 2$.

Since for an admissible function $u$ we have $|\nabla u(x)| < 1$ at all regular points $x$, we conclude that $1/2 < F(u;\Om) \le 1$, and therefore
$$
\inf_u F(u;\Om) \ge 1/2.
$$
It was proved in \cite{PkakSIAM} that for $d = 2$ and each $\Om \subset \RRR^2$,
$$
\inf_u F(u;\Om) = 1/2.
$$
This result can easily be extended to higher dimensions $d > 2$ (see the appendix). The only problem remaining open is $d = 1$.

The aim of this paper is twofold: first, solve the problem for $d = 1$, and second, solve the problem in arbitrary dimension in the class of radially symmetric functions $u$.

An open connected bounded set $\Om$ in the one-dimensional case is an interval; by a scaling transformation the problem can be reduced to the case when $\Om = (-1,\, 1)$.

Let us reformulate the { one-dimensional} problem for this specific case. We consider functions~$u$ on $[-1,\, 1]$ such that

(i) $u(-1) = 0 = u(1)$ and $u(x) < 0$ for $-1 < x < 1$;

(ii) the derivative $u'(x)$ exists and is continuous on an open full-measure subset of $[-1,\, 1]$;

(iii) $u$ satisfies SIC.\\
A function is called {\it admissible}, if it satisfies (i)--(iii).

Recall that $|u'(x)| < 1$ at each regular point $x$ of an admissible function $u$.

Single impact condition can be reformulated here as follows. We say that $u$ satisfies {\it forward (backward) SIC} at a regular point $x_0$, if $u'(x_0) < 0$ ($u'(x_0) > 0$) and the ray
$$
(x_0, u(x_0)) + \tau (-2u'(x_0),\ 1 - u'^2(x_0)), \ \tau \ge 0
$$
lies above the part of $\graph(u)$ to the right (to the left) of $x_0$.

Otherwise the forward and backward SIC can be expressed as follows. The function $u$ satisfies forward SIC at $x_0$, if $u'(x_0) < 0$ and
\begin{equation}\label{fSIC}
\frac{1 - u'^2(x_0)}{-2u'(x_0)} \ge \frac{u(x) - u(x_0)}{x - x_0} \quad \text{for all } \ x_0 < x \le 1,
\end{equation}
and satisfies backward SIC at $x_0$, if $u'(x_0) > 0$ and
\begin{equation}\label{bSIC}
\frac{1 - u'^2(x_0)}{2u'(x_0)} \ge \frac{u(x) - u(x_0)}{x_0 - x} \quad \text{for all } \ -1 \le x < x_0.
\end{equation}
We say that $u$ satisfies SIC, if at each regular point with nonzero derivative it satisfies either forward or backward SIC.

The resistance of the function $u$ equals
$$
F(u) = \frac 12 \int_{-1}^1 \frac{dx}{1 + u'^2(x)}.
$$


\begin{proposition}\label{prop1}
The infimum of $F$ in the class of admissible functions $u : [-1,\, 1] \to \RRR$ is attained at~$u_0$, where $u_0(x) = ((|x| + 1)^2 - 4)/4$,
and is equal to $F(u_0) = \pi/2 - 2\arctan(1/2) \approx 0.6435$.
\end{proposition}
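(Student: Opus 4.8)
The plan is to establish (a) that $u_0$ is admissible with $F(u_0)=\pi/2-2\arctan(1/2)$, and (b) that $F(u)\ge F(u_0)$ for every admissible $u$; part (b) is the substantial one. For (a): on $[0,1]$ we have $u_0(x)=\tfrac14(x+1)^2-1$, a parabola with vertical axis and focus $(-1,0)$, so every vertically incident particle hitting this arc is reflected toward the single point $(-1,0)$. I would check that each such reflected ray stays above the left arc $\graph(u_0|_{[-1,0]})$; since that arc is convex it suffices to test the ray only at $x=-1$ and $x=0$, which reduces to an elementary one–variable inequality. By the $y$–axis symmetry the mirror statement holds for rays reflected off the left arc, so $u_0$ satisfies SIC; together with $u_0(\pm1)=0$ and $u_0<0$ on $(-1,1)$ this makes $u_0$ admissible. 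Finally $F(u_0)=\int_0^1\frac{dx}{1+((x+1)/2)^2}=2\bigl(\arctan 1-\arctan(1/2)\bigr)=\pi/2-2\arctan(1/2)$, by the substitution $w=(x+1)/2$.

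For (b) the elementary input is that SIC --- using only the ``far endpoint'' instances of \eqref{fSIC} and \eqref{bSIC}, i.e.\ with $x=1$ resp.\ $x=-1$ --- forces $\frac{1-u'^2(x)}{2u'(x)}\ge\frac{-u(x)}{x+1}$ at every regular point with $u'(x)>0$, and $\frac{1-u'^2(x)}{-2u'(x)}\ge\frac{-u(x)}{1-x}$ at every regular point with $u'(x)<0$. Writing $\lambda(x)=\frac{-u(x)}{x+1}$ and $\mu(x)=\frac{-u(x)}{1-x}$, and solving the quadratic $p^2+2\lambda p-1=0$ for the largest admissible slope and simplifying, one gets the pointwise estimates
\[
\frac{1}{1+u'^2(x)}\ \ge\ \frac12\Bigl(1+\frac{\lambda(x)}{\sqrt{1+\lambda^2(x)}}\Bigr)\ \text{on}\ \{u'>0\},\qquad \frac{1}{1+u'^2(x)}\ \ge\ \frac12\Bigl(1+\frac{\mu(x)}{\sqrt{1+\mu^2(x)}}\Bigr)\ \text{on}\ \{u'<0\},
\]
with equality exactly when the reflected ray passes through the far endpoint; on $\{u'=0\}$ the integrand equals $1$, which is larger still. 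This already yields $F(u)>1/2$ quantitatively, but it is weak when $u$ is shallow, so it must be combined with $\frac{1}{1+u'^2}\ge 1-u'^2$ and a genuine variational argument.

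The variational core: I expect one first reduces to a class on which a minimizer exists and shows the minimizer is monotone on each side of its lowest point --- a ``single valley''. Then, by localized (needle) variations respecting SIC and the boundary and sign constraints $u(\pm1)=0$, $u\le0$, one shows that the minimizer must realize \emph{equality} in the estimates above along its rising and falling arcs (otherwise $F$ could be strictly decreased); equivalently, the rays reflected off the rising arc all pass through $(-1,0)$ and those off the falling arc through $(1,0)$. Translating ``all reflected rays through a fixed point'' into a first–order ODE and imposing the boundary value $u(1)=0$ (resp.\ $u(-1)=0$) pins the arc to be exactly $\tfrac14(x+1)^2-1$ on $[0,1]$ (resp.\ $\tfrac14(1-x)^2-1$ on $[-1,0]$) --- the focal parameter is fixed by the boundary condition, leaving no freedom. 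These two arcs intersect only at $x=0$ and there take the common value $-3/4$, so the minimizer equals $u_0$ and $\inf F=F(u_0)$.

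The main obstacle is precisely this variational core: $F$ is not convex and SIC is a nonlocal inequality constraint, so neither the compactness needed for existence nor the ``single valley'' reduction is automatic, and the shallow and otherwise non-extremal competitors must be excluded by a global rather than merely pointwise argument. A clean way to bypass existence and the reduction would be a calibration: a function $S(x,z)$ --- necessarily with a corner over $x=0$ --- satisfying $\frac{1}{2(1+p^2)}\ge S_x(x,z)+S_z(x,z)\,p$ for every $(x,z,p)$ compatible with SIC, with equality along $\graph(u_0)$; then $F(u)\ge\int_{-1}^1\frac{d}{dx}S(x,u(x))\,dx=S(1,0)-S(-1,0)=F(u_0)$ for all admissible $u$. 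Constructing such an $S$ and verifying the accompanying two–variable inequality over the SIC–compatible region would be the technical heart of the proof.
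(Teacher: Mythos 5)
Your part (a) is fine: the focal property of the parabola, the reduction of ``ray above the convex left arc'' to checking the two endpoints $x=-1$ and $x=0$, and the computation $F(u_0)=2(\arctan 1-\arctan\tfrac12)$ are all correct. The pointwise consequences of SIC you extract in part (b) (the bounds $\tfrac{1}{1+u'^2}\ge\tfrac12(1+\lambda/\sqrt{1+\lambda^2})$ etc., obtained from \eqref{fSIC}, \eqref{bSIC} at the far endpoints) are also correct and are in fact the same relaxation the paper exploits in Lemma~\ref{l3-0} (condition \eqref{above_zero}). But the proof stops there: everything labelled ``the variational core'' is a to-do list, not an argument, and you yourself flag each item as an obstacle. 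Concretely, (i) no existence of a minimizer is established (the paper never needs one -- it works with $\ve$-approximations via Lemma~\ref{l1-1}, Lemma~\ref{l1-2} and Corollary~\ref{cor}); (ii) the ``single valley'' reduction is asserted, not proved -- in the paper this is Lemma~\ref{l2}, whose hard content is that sorting the slopes of a piecewise linear admissible function into increasing order \emph{preserves SIC}, proved by a careful induction on transpositions of graph segments; and (iii) the step from ``equality in the pointwise bound'' to the parabola is replaced in the paper by Lemma~\ref{l3-0}, a one-parameter deformation of an arbitrary monotone arc (whose reflected rays clear the far focus) into the focal parabola, with an explicit monotonicity computation $R'(t)\le 0$. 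None of (i)--(iii) is routine, and your sketch gives no route to any of them that differs from simply doing this work.

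The fallback calibration idea has an additional conceptual problem: SIC is a nonlocal constraint, so there is no well-defined pointwise ``SIC-compatible region'' of triples $(x,z,p)$. If you replace SIC by its endpoint relaxation (your $\lambda,\mu$ inequalities) you do get a pointwise region, but you would then be calibrating a strictly larger class of competitors, including wildly oscillating ones, and it is not established that this relaxed problem has the same infimum; ruling out oscillation is exactly what the rearrangement Lemma~\ref{l2} accomplishes, and it uses the full (nonlocal) SIC in its induction. So the calibration would have to be constructed and verified over the relaxed region -- the very thing you defer -- and it is not even clear a priori that one with equality along $\graph(u_0)$ exists there. As it stands, the lower bound $F(u)\ge F(u_0)$ for an arbitrary admissible $u$ is not proved.
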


The graph of $u_0$ (formed by two mutually symmetric arcs of parabolas with foci at $(1,0)$ and $(-1,0)$) is shown in Fig.~\ref{fig:u0}.

\begin{center}
\parbox{13cm}{\begin{picture}(0,130)
\rput(2.2,3.4){
\scalebox{0.8}{
 \psecurve[linewidth=1pt,linecolor=blue]
 (4,-4.2)(5,-3.75)(6,-3.2)(7,-2.55)(8,-1.8)(9,-0.95)(10,0)(11,1.05)
  \psecurve[linewidth=1pt,linecolor=blue]
 (6,-4.2)(5,-3.75)(4,-3.2)(3,-2.55)(2,-1.8)(1,-0.95)(0,0)(-1,1.05)
\rput(10,0.35){\scalebox{1.2}{1}}
\rput(0,0.35){\scalebox{1.2}{$-1$}}
\rput(10.9,-0.3){\scalebox{1.2}{$x$}}
\psline[linewidth=0.4pt,arrows=->,arrowscale=1.5](5,-4.5)(5,1.5)
\psline[linewidth=0.4pt,arrows=->,arrowscale=1.5](-0.7,0)(11,0)
\psline[linewidth=0.6pt,linestyle=dotted](5,0)(5,-3.75)
   \psline[linewidth=0.6pt,arrows=->,arrowscale=1.8,linecolor=red](2,1)(2,-0.5)
   \psline[linewidth=0.6pt,arrows=->,arrowscale=1.8,linecolor=red](2,-0.5)(2,-1.8)(11.6,0.36)
   \psline[linewidth=0.6pt,arrows=->,arrowscale=1.8,linecolor=red](4,1)(4,-0.5)
   \psline[linewidth=0.6pt,arrows=->,arrowscale=1.8,linecolor=red](4,-0.5)(4,-3.2)(11.5,0.8)
}}
\end{picture}}
\vskip 0.3cm
\figure \label{fig:u0} The optimal function $u_0$ and two particles of the vertical flow reflected by $\graph(u_0)$.
\end{center}

Thus, one comes to the following result:
\begin{quote}
The minimal resistance of a hollow depends only on the dimension $d$ and does not depend on $\Om$, and equals $0.6435$, if $d = 1$, and $0.5$, if $d \ge 2$.
\end{quote}


Now consider the $d$-dimensional case where the function $u$ is radial and correspondingly the domain~$\Om$ is a ball centered at the origin. By a scaling transformation of $\RRR^d$ the problem can be reduced to the case when $\Om$ is the unit ball $|x| < 1$ and $u(x) = \phi(|x|)$, with $\phi(\xi) < 0$ for $0 \le \xi < 1$ and $\phi(\xi) = 0$ for $\xi \ge 1$.

\begin{proposition}\label{prop2-0}
The infimum of $F(u; \Om)$ \eqref{F} in the class of admissible radial functions $u(x) = \phi(|x|)$ is attained at the function $u(x) = ((|x| + 1)^2 - 4)/4$,\, $x \in \Om$. In particular, in the case $d = 2$ the optimal hollow is obtained by rotating the graph of $u_0$ in Fig.~\ref{fig:u0} about the vertical axis.
\end{proposition}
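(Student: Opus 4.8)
The plan is to reduce the radial $(d+1)$-dimensional problem to a one-dimensional one and then to rerun the argument behind Proposition~\ref{prop1}, this time carrying the extra radial weight. For the reduction, set $v(\xi):=\phi(|\xi|)$ on $[-1,1]$; this is the profile of $\graph(u)$ in any vertical $2$-plane through the flow axis. A particle hitting $\graph(u)$ at $(x,u(x))$ is reflected inside the vertical plane spanned by $x$ and the $y$-axis, and inside that plane $\graph(u)$ coincides with $\graph(v)$ (over a point $p$ the surface has height $\phi(|p|)=v(\pm|p|)$) while the reflection law is the planar one with slope $v'=\phi'(|x|)$. Hence a reflected ray avoids $\graph(u)$ if and only if it avoids $\graph(v)$; since all radii reflect identically, this shows that $u(x)=\phi(|x|)$ is admissible in $\RRR^{d+1}$ exactly when $v$ is admissible in the one-dimensional sense (i)--(iii). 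Note that the claimed optimizer restricts to the even function $v_0=u_0|_{[-1,1]}$, which is the minimizer of Proposition~\ref{prop1}. In polar coordinates $F(u;\Om)=d\int_0^1 \xi^{d-1}(1+\phi'(\xi)^2)^{-1}\,d\xi$, i.e.\ a weighted average of $1/(1+\phi'^2)$ over $[0,1]$ with weight proportional to $\xi^{d-1}$. For $d=1$ this is the resistance functional of Proposition~\ref{prop1} (whose minimizer is even), so that case is already covered; for $d>1$ the weight is different and the variational argument has to be redone with it.

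Following Proposition~\ref{prop1}, I would first produce a minimizer $\phi_*$ by compactness ($F$ is continuous on the admissible class, which is closed under uniform convergence), and then analyse it. On any arc where SIC is inactive, $\phi_*$ freely minimizes $\int\xi^{d-1}(1+\phi'^2)^{-1}d\xi$, so $\phi_*'$ is constant there; but a straight piece of nonzero slope can be made steeper until SIC becomes active, which strictly decreases $F$, and a flat piece is even more wasteful, so SIC must in fact be active everywhere. At a point $\xi$ of the right half (where $\phi_*'>0$) the binding chord in \eqref{bSIC} is the one joining $(\xi,\phi_*(\xi))$ to the opposite corner $(-1,0)$ of the hollow's mouth (for a convex profile this is the steepest admissible chord), so that, with $D:=-\phi_*$, the minimizer solves $(1-\phi_*'(\xi)^2)/(2\phi_*'(\xi))=D(\xi)/(\xi+1)$ together with $\phi_*(1)=0$. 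This first-order boundary value problem has the unique solution $\phi_0(\xi)=((\xi+1)^2-4)/4$, and by symmetry the left half is its mirror image. Since neither this ODE nor its boundary condition involves the weight $\xi^{d-1}$, the minimizer is the \emph{same} function $\phi_0$ for every $d$. Finally $\phi_0$ is admissible in every dimension, because $\phi_0|_{[0,1]}$ and $\phi_0|_{[-1,0]}$ are arcs of parabolas with vertical axes and foci at $(-1,0)$ and $(1,0)$: by the reflective property of the parabola each vertical reflected ray passes through the opposite focus, a boundary point of the hollow, and then escapes, so the full SIC holds.

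The delicate part is the assertion above that SIC is active throughout the minimizer, \emph{uniformly in $d$}. For large $d$ the weight $\xi^{d-1}$ concentrates near $\xi=1$, so a profile that is almost flat (large resistance density, but cheap) over most of $[0,1]$ and steep only near $\xi=1$ is a genuine competitor, and one must still show it is beaten by $\phi_0$. A clean way to dispose of all $d$ at once is a Weierstrass (calibration) sufficiency argument for the reduced one-dimensional control problem---minimize $\int_0^1 w(\xi)(1+\phi'(\xi)^2)^{-1}d\xi$, with $w(\xi)=d\,\xi^{d-1}$, over $\phi$ with $\phi(1)=0$ and $\phi'$ subject to SIC---by exhibiting its value function $S(\xi,z)$, checking that $S$ does not depend on the height $z$ at the free endpoint $\xi=0$, and verifying that $\phi_0$ is calibrated by $S$; this gives $F(u;\Om)\ge F(u_0;\Om)$ for all admissible radial $u$, for every $d$. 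A useful auxiliary fact, obtained from the endpoint form of SIC by an ODE comparison, is that $-\phi(\xi)\le-\phi_0(\xi)$ for every admissible radial $\phi$ and all $\xi$---the optimal hollow is the deepest admissible one---which, together with the activity of the constraint, singles out $\phi_0$ as the extremal.
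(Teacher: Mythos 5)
Your reduction of the radial problem to the weighted one-dimensional functional $F_d(\phi)=\int_0^1 d\,\xi^{d-1}(1+\phi'^2)^{-1}\,d\xi$, your identification of the parabola as the solution of the active-constraint equation $(1-\phi'^2)/(2\phi')=-\phi(\xi)/(\xi+1)$, and your verification of admissibility of $\phi_0$ via the focal property all match the paper, which packages the weighted problem as Theorem \ref{t1} with $f(x)=\frac d2|x|^{d-1}$. But the optimality proof itself has genuine gaps. First, the existence step fails as stated: $F_d$ depends on $\phi'$ and is not continuous under uniform convergence, and the admissible class (derivative required to exist only on an open full-measure subset, discontinuities allowed) is not closed in any topology in which $F_d$ is continuous; the paper never proves existence by compactness but instead shows directly that $\FFF(u_0)\le\FFF(u)+\ve$ for every admissible $u$ and every $\ve>0$. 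Second, your analysis of the putative minimizer presupposes that it is convex and that the binding SIC chord runs to the opposite endpoint $(-1,0)$; neither is automatic for an arbitrary admissible competitor, and the paper spends Lemmas \ref{l1-1}, \ref{l1-2} and \ref{l2} (approximation by piecewise linear functions followed by a rearrangement of the segments, with the Hardy--Littlewood inequality handling the weight $f$) precisely to reduce to convex monotone profiles. Also, the remark that on an arc with inactive SIC the slope is constant is wrong for $d>1$: the Euler--Lagrange equation of $\int \xi^{d-1}(1+\phi'^2)^{-1}\,d\xi$ gives $\xi^{d-1}\phi'/(1+\phi'^2)^2=\mathrm{const}$, not $\phi'=\mathrm{const}$.

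The step you yourself flag as delicate --- that $\phi_0$ beats every admissible profile for every $d$, including near-flat profiles that are cheap where the weight is small --- is exactly the heart of the matter, and your calibration argument is only named, not carried out: no value function $S$ is exhibited and no Weierstrass condition is verified. The paper's substitute is Lemma \ref{l3-0}: for a monotone profile $u$ on $[a,b]$ all of whose reflected rays clear the point $(x_0,0)$, one deforms $u$ into the parabola through the one-parameter family $u(\cdot,t)$ of \eqref{1par} and shows by an explicit computation (the function $\Phi(w)=\Phi_1(w)-\Phi_2(w)$ and a root-counting argument) that the weighted resistance is non-increasing along the deformation; the weight enters only through being non-negative and non-decreasing, which is why the minimizer is independent of $d$. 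Your auxiliary claim that every admissible $\phi$ satisfies $-\phi\le-\phi_0$ is also unproved and doubtful as stated, since admissibility only requires forward \emph{or} backward SIC at each regular point, so the differential inequality you would extract from \eqref{bSIC} with $x=-1$ need not hold on all of $(0,1)$. In short: the answer and the reduction are right, but the proof of the inequality $F_d(\phi)\ge F_d(\phi_0)$ is missing.
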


Note that in the radial case we have $|\nabla u(x)| = \phi'(\xi)$. Let $\omega_{d-1}$ be the volume of the $(d-1)$-dimensional unit sphere.
Since the volume of the $d$-dimensional unit ball equals $\omega_{d-1}/d$, we get that the multiple integral $F(u; \Omega)$ in \eqref{F} can be written in terms of $\phi$ as
\[
F_d(\phi) =\frac{d}{\omega_{d-1}} \int_0^1 \frac{\omega_{d-1}\xi^{d-1}}{1 + \phi'^2(\xi)}\, d(\xi) = \int_0^1 \frac{d\, \xi^{d-1}}{1 + \phi'^2(\xi)}\, d\xi.
\]

It is convenient to assume that $\phi$ is even and defined on $[-1,\, 1]$; then condition SIC for~$\phi$ is equivalent to condition SIC for the corresponding radial function $u$. Indeed, any vertical central cross section of $\graph(u)$ is congruent to $\graph(\phi)$, and the trajectory of a particle in $\RRR^{d+1}$ reflected by $\graph(u)$ lies in such a cross section and is congruent to a trajectory in $\RRR^2$ reflected by $\graph(\phi)$. If all trajectories in the $d$-dimensional radial case have at most one reflection, the same is true for trajectories in the 1-dimensional case, and vice versa.

Thus, Proposition \ref{prop2-0} can be reformulated in the new terms as follows.




\vspace{2mm}

\noindent{\bf Proposition \ref{prop2-0}'.}
{\it The infimum of $F_d$ in the class of even admissible functions $\phi : [-1,\, 1] \to \RRR$ is attained at $\phi(\xi) = u_0(\xi) = ((|\xi| + 1)^2 - 4)/4$. }
\vspace{2mm}

\begin{remark}\label{zam5}
Proposition \ref{prop2-0} states that the minimizer coincides with the minimizer of $F$ in Proposition~\ref{prop1} (and therefore does not depend on $d$). The minimal resistance $m_d = F_d(u_0)$ is always greater than~$0.5$. The values $m_d$ are monotone decreasing from $m_1 \approx 0.6435$ to $0.5$ as $d \to \infty$. Indeed, since for $0 < \ve < 1$ the ratio
$$
\frac{\text{volume of the ball with radius $1-\ve$}}{\text{volume of the unit ball}}
$$
goes to zero as $d \to \infty$, we conclude that almost all volume is concentrated near the boundary where the specific resistance is approximately $1/2$. In the limit $d \to \infty$,\, $\ve \to 0$ we obtain that the resistance goes to $1/2$.
%
\end{remark}




Both Propositions \ref{prop1} and \ref{prop2-0}' are simple consequences of the following theorem.

\begin{theorem}\label{t1}
Let $f$ be a non-negative even differentiable function on $[-1,\, 1]$ monotone non-decreasing on $[0,\, 1]$, and let
\begin{equation}\label{FFF}
\FFF(u) = \int_{-1}^1 \frac{f(x)}{1 + u'^2(x)}\, dx.
\end{equation}
Then the infimum of $\FFF$ in the class of even admissible functions $u: [-1,\, 1] \to \RRR$ is attained at $u_0(x) = ((|x| + 1)^2 - 4)/4$.
\end{theorem}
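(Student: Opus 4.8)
\emph{Proposed approach.} By evenness we may work on $[0,1]$, where $\FFF(u)=2\int_0^1 f(x)(1+u'(x)^2)^{-1}\,dx$. The proof splits into three parts: (i) extract from SIC a pointwise slope bound and the barrier $u\ge u_0$; (ii) reduce the weighted inequality to a family of unweighted ``tail'' inequalities, using only that $f\ge 0$ is non-decreasing on $[0,1]$; (iii) prove the tail inequalities. Step (iii) is the crux.

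(i) At a regular $x\in(0,1)$ with $u'(x)>0$, backward SIC forces the reflected ray to pass above the far endpoint $(-1,0)$, which reads $(1+x)u'(x)^2-2u(x)u'(x)-(1+x)\le 0$, i.e.\ $u'(x)\le\Phi(x,-u(x))$ with $\Phi(x,v)=(\sqrt{v^2+(1+x)^2}-v)/(1+x)$; at a point with $u'(x)<0$ forward SIC gives the stronger $|u'(x)|\le\Psi(x,-u(x))\le\Phi(x,-u(x))$, where $\Psi(x,v)=(\sqrt{v^2+(1-x)^2}-v)/(1-x)$. Hence $|u'(x)|\le\Phi(x,-u(x))$ everywhere, so $v:=-u$ satisfies $v'\ge-\Phi(x,v)$ with $v(1)=0$; comparing (backward from $x=1$) with the solution $v_0=-u_0$ of $v_0'=-\Phi(x,v_0)$, $v_0(1)=0$, and using that $\Phi(x,\cdot)$ is decreasing and Lipschitz, the ODE comparison principle gives $v\le v_0$, i.e.\ $u\ge u_0$ on $[-1,1]$. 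Two structural facts worth recording: the solutions of $w'=\Phi(x,-w)$ are precisely the parabolas $w=\tfrac{(x+1)^2}{4p}-p$ with focus $(-1,0)$ — so $u_0$ saturates the slope bound at every point and its reflected rays all graze the endpoint $(-1,0)$ — and $(1+\Phi(x,v)^2)^{-1}=\tfrac12\bigl(1+v/\sqrt{v^2+(1+x)^2}\,\bigr)$.

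(ii) Put $d(x)=(1+u'^2)^{-1}-(1+u_0'^2)^{-1}$ and $\mathcal D(t)=\int_t^1 d$. Integrating by parts, $\tfrac12(\FFF(u)-\FFF(u_0))=\int_0^1 f\,d=f(0)\mathcal D(0)+\int_0^1 f'(t)\mathcal D(t)\,dt$, so since $f\ge 0$ and $f'\ge 0$ it suffices to prove the tail inequality $\int_t^1(1+u'^2)^{-1}\ge\int_t^1(1+u_0'^2)^{-1}$ for all $t\in[0,1]$ and all admissible even $u$ (the case $t=0$ being exactly the assertion $F(u)\ge F(u_0)$ of Proposition~\ref{prop1}). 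This family is also necessary for the theorem in the stated generality — approximate $f=\mathbf 1_{[t,1]}$ by smooth non-decreasing functions — so it is precisely the content that has to be established.

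(iii) Here the \emph{full} SIC is needed, not merely the pointwise bound of (i): for the relaxation keeping only $u'\le\Phi(x,-u)$ and $u_0\le u\le 0$, ``double-well'' competitors (equal to $u_0$ away from the centre, lying above it near the centre with a steep descending stretch) are admissible and violate the tail inequality at small $t$; they are killed by the requirement that a reflected ray leaving the right branch toward the upper left must clear the entire left branch, which — the left branch being the mirror image of the right — cannot happen once the graph is pushed above $u_0$ near the origin, precisely because $u_0$'s own reflected rays already graze $(-1,0)$. I would encode this via the parameter $p(x)\in[\tfrac{x+1}{2},1]$ with $u=\tfrac{(x+1)^2}{4p}-p$: then $u'=A-(1+A^2)p'$ with $A=\tfrac{x+1}{2p}=\Phi(x,-u)$, the bound $u'\le\Phi(x,-u)$ becomes simply $p'\ge 0$, the constraint $u\le 0$ becomes $p\ge\tfrac{x+1}{2}$, $u(1)=0$ becomes $p(1)=1$, and $u_0$ is $p\equiv 1$; it then remains to show that $\int_t^1\bigl(1+(A-(1+A^2)p')^2\bigr)^{-1}dx$ is minimised, among non-decreasing $p$ that also obey the remaining non-local SIC constraints, by $p\equiv 1$. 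I expect the genuine obstacle to be quantifying ``each reflected ray of $u$ clears the mirror branch'' sharply enough to force $p$ back up to $1$ near the right endpoint faster than any gain realised on the set where $p<1$ — most plausibly through a calibration of the region $\{u_0\le u\le 0\}$ adapted to the parabola field of (i), with a correction term supported where the slope bound is slack, or by following a reflected ray of $u$ alongside the corresponding ray of $u_0$ and comparing the angular measures they subtend.
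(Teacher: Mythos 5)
Your steps (i) and (ii) are essentially correct preliminaries: the pointwise consequence of SIC, the identity $(1+\Phi^2)^{-1}=\tfrac12(1+v/\sqrt{v^2+(1+x)^2})$, and the integration by parts reducing the weighted statement to the family of tail inequalities $\int_t^1(1+u'^2)^{-1}dx\ge\int_t^1(1+u_0'^2)^{-1}dx$ are all fine, and the last reduction is a clean equivalent reformulation of the theorem. But the proposal stops exactly where the proof has to begin: step (iii), which you yourself call the crux, is not carried out. You correctly observe that the pointwise relaxation of SIC is insufficient, introduce the parametrization by $p(x)$, and then state what ``remains to be shown'' and what you ``expect'' a calibration or ray-comparison argument to look like. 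That remaining content \emph{is} the theorem; nothing in the proposal establishes the tail inequality for even a single $t$. A secondary error: the barrier $u\ge u_0$ asserted in (i) is false for the admissible class as defined, because admissible functions need not be continuous (the paper's own piecewise-linear approximants jump at their nodes). For instance $u\equiv-1$ on $(-1,1)$ with $u(\pm1)=0$ is admissible --- SIC constrains only points with nonzero derivative --- yet $u(0)=-1<u_0(0)=-3/4$. The ODE comparison you invoke requires absolute continuity, which is not among the hypotheses; this matters for your step (iii) insofar as the region $\{u_0\le u\le 0\}$ you propose to calibrate need not contain the graph of a competitor.

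For comparison, the paper closes the gap you left open in two moves, neither of which resembles your sketch. First (Lemmas~\ref{l1-1}, \ref{l1-2}, \ref{l2}) it approximates an arbitrary even admissible $u$ by a piecewise linear one and then \emph{sorts the edges of its graph in order of increasing slope}; the combinatorial heart of the argument is that this sorting, performed one transposition at a time, preserves SIC (checked ray by ray), while by the Hardy--Littlewood inequality with the weight $f$ --- which coincides with its own symmetric increasing rearrangement --- the value of $\FFF$ does not increase. This produces a \emph{convex} even competitor, for which, on $[0,1]$, the full non-local SIC collapses to exactly the pointwise condition of your step (i): every reflected ray clears $(-1,0)$. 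Second (Lemma~\ref{l3-0}) such a competitor is deformed continuously into the focal parabola $u_0$ by replacing it with the parabola on a growing interval, and a direct computation shows the weighted resistance is non-increasing along the deformation. So the obstacle you identify --- that one must exploit the mirror branch to rule out profiles lying above $u_0$ near the centre --- is handled by convexification via rearrangement, not by a calibration; any completion of your plan would have to supply an argument of comparable strength.
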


Indeed, substituting $f(x) = \frac{d}{2}\, |x|^{d-1}$ in \eqref{FFF} and using that $\FFF(u) = F_d(u)$, one immediately obtains Proposition~\ref{prop2-0}'. Further, Proposition \ref{prop1} is a direct consequence of Theorem \ref{t1} (with $f(x) \equiv 1/2$) and the following lemma.

\begin{center}
		(a) \includegraphics[scale=1]{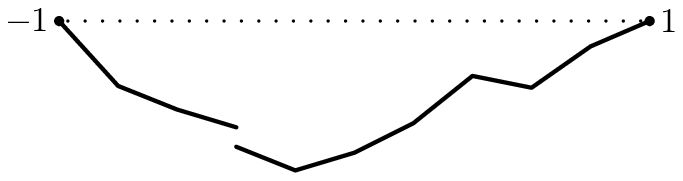}\\
		\vskip 0.4cm
		
		(b) \includegraphics[scale=1]{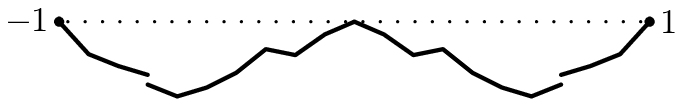}\\
	\figure \label{fig:symmetrization of function}
	The function before (a) and after (b) symmetrization.
\end{center}

\begin{lemma}\label{l0}
The infima of $F$ in the class of admissible functions and in the narrower class of {\rm even} admissible functions coincide.
\end{lemma}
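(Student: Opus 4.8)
\emph{Proof strategy.} One direction is trivial: every even admissible function is admissible, so the infimum of $F$ over even admissible functions is at least the infimum over all admissible functions. For the reverse inequality it is enough to attach to an arbitrary admissible $u$ an even admissible $v$ with $F(v)\le F(u)$. The natural candidates are $v_{-}(x)=\min\{u(x),u(-x)\}$ and $v_{+}(x)=\max\{u(x),u(-x)\}$, both of which are even; and since at each point one of $v_{\pm}$ follows $u$ while the other follows $x\mapsto u(-x)$, the substitution $x\mapsto -x$ gives $F(v_{-})+F(v_{+})=2F(u)$, so at least one of $v_{\pm}$ already has resistance $\le F(u)$. The plan is to work with $v:=v_{-}$: this deepens the hollow, which only helps SIC, whereas $v_{+}$ raises the graph and a reflected ray may fail to clear it.

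That $v$ is even, negative on $(-1,1)$, vanishes at $\pm1$, and is $C^1$ off a null set (so that (i) and (ii) hold) is routine, modulo some measure-theoretic bookkeeping at the interface of the two branches, which I would handle by approximation. The substantive — and, I expect, clean — point is that $v$ again satisfies SIC. Two facts drive it: $\graph(v)$ is symmetric about the $y$-axis, and at a point where $v$ follows the branch $x\mapsto u(-x)$ the reflected ray of $v$ is the mirror image across $\{x=0\}$ of the reflected ray of $u$ at the mirror point. Concretely, let $x_0$ be regular for $v$ with $v'(x_0)\neq 0$. If $v=u$ near $x_0$ (so $u(x_0)<u(-x_0)$), the reflected ray of $v$ at $x_0$ is that of $u$ at $x_0$; by SIC for $u$ it stays above $\graph(u)$, hence above $\graph(v)$ since $v\le u$ everywhere. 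If instead $v(x)=u(-x)$ near $x_0$, set $y_0=-x_0$; then $u$ is regular at $y_0$, $u'(y_0)=-v'(x_0)\neq0$, and the reflected ray of $v$ at $x_0$ is the reflection across $\{x=0\}$ of the reflected ray of $u$ at $y_0$. As $\graph(v)$ is symmetric, the former lies above $\graph(v)$ iff the latter does, and the latter does by the previous case applied at $y_0$. Hence $v$ is admissible.

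It remains to compare resistances. Write $A=\{x:u(x)<u(-x)\}$, $B=\{x:u(x)>u(-x)\}=-A$, $C=\{x:u(x)=u(-x)\}$. Since $v'=u'$ on $A\cup C$ and $v'(x)=-u'(-x)$ on $B$, one finds
$$F(v)-F(u)=\frac12\left(\int_A\frac{dx}{1+u'^2(x)}-\int_B\frac{dx}{1+u'^2(x)}\right),$$
and since $\int_B\frac{dx}{1+u'^2(x)}=\int_A\frac{dx}{1+u'^2(-x)}$ via $x\mapsto -x$, the lemma reduces to
$$\int_A\frac{dx}{1+u'^2(x)}\ \le\ \int_A\frac{dx}{1+u'^2(-x)},$$
that is, to showing that on the set $A$, where the hollow dips strictly below its mirror image, $u$ is — in this averaged sense — at least as steep at $x$ as at $-x$.

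This last inequality is the main obstacle, and it is exactly here that SIC must enter in an essential way; the bound $|u'|<1$ is far from sufficient. The tool I would bring in is the consequence of SIC obtained by extending a reflected ray to the rim: at a regular point $x$ with $u'(x)<0$, forward SIC applied at the rim point $x=1$ (where $u=0$) yields $\dfrac{1-u'^2(x)}{-2u'(x)}\ge\dfrac{-u(x)}{1-x}$, and symmetrically $\dfrac{1-u'^2(x)}{2u'(x)}\ge\dfrac{-u(x)}{1+x}$ when $u'(x)>0$. In words: the deeper a point of the graph and the nearer it lies to the rim its ray is heading for, the flatter $u$ must be there. Since for $x\in A$ the point $(x,u(x))$ lies below $(-x,u(-x))$, these pointwise bounds should force $u$ to be, in the required integrated sense, steeper on $A$ than on $B$. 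Converting this into an actual proof — in particular dealing with the points where $u'$ vanishes or changes sign, and with points of $A$ whose mirror images reflect toward the opposite rim — is the step that demands genuine work; the rest is bookkeeping.
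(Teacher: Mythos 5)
Your reduction is honest about where the difficulty sits, and that is exactly where the proof is missing. The identity $F(v_-)-F(u)=\tfrac12\bigl(\int_A\frac{dx}{1+u'^2(x)}-\int_A\frac{dx}{1+u'^2(-x)}\bigr)$ is correct, but the inequality $\int_A\frac{dx}{1+u'^2(x)}\le\int_A\frac{dx}{1+u'^2(-x)}$ is the entire content of the lemma in your formulation, and nothing in the proposal establishes it. It is not a consequence of $|u'|<1$, and the rim estimates $\frac{1-u'^2(x)}{-2u'(x)}\ge\frac{-u(x)}{1-x}$ you invoke constrain $u'$ only where $u'\ne 0$; the threatening configuration (a deep, nearly flat stretch on one side facing a uniformly sloped stretch on the other) is controlled, if at all, only by a genuinely quantitative use of SIC that you have not carried out. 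The fallback observation that one of $v_\pm$ has resistance $\le F(u)$ does not close the gap either: in the bad case it is $v_+$ that has the smaller resistance, and $v_+$ raises part of the graph, so a ray reflected from the retained branch of $u$ need no longer clear $\graph(v_+)$; admissibility of $v_+$ is exactly what fails. So the argument, as written, proves the easy inequality between the two infima and reduces the hard one to an unproven claim. (The SIC verification for $v_-$ via the mirror-ray argument is fine, modulo the differentiability of $\min\{u(x),u(-x)\}$ on the coincidence set, which is indeed only bookkeeping.)

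The paper sidesteps all of this with a different symmetrization: $\tilde u(x)=\tfrac12\,u(2|x|-1)$, i.e.\ two half-scale copies of $\graph(u)$ placed side by side, mirror-symmetric about the $y$-axis. Because $\tilde u'^2(x)=u'^2(2|x|-1)$ and the weight in $F$ is constant, the change of variables gives the \emph{exact} equality $F(\tilde u)=F(u)$ — no comparison of the two halves of $u$ is ever needed — and SIC survives because it is invariant under homothety and a forward ray from the left copy clears the point $(0,0)$ and hence the (nonpositive) right copy. The lesson is that $\min$/$\max$ symmetrization is the wrong tool here precisely because $F$ is not monotone under it; the scaling trick trades the delicate "deeper implies steeper on average" claim for a trivial identity. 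If you want to salvage your route you must either prove your integral inequality or switch to a rearrangement that preserves the distribution of $u'$ exactly, which is what the paper's construction does.
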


\begin{proof}
It suffices to show that for any admissible function $u$ there is an {\it even} admissible function $\tilde u$ with the same resistance, $F(\tilde u) = F(u)$. Let $\tilde u(x) = \frac 12\, u(2|x| - 1)$. The graph of $\tilde u$ is obtained by putting together two smaller copies of $\graph(u)$ mutually symmetric with respect to the vertical axis (Fig.~\ref{fig:symmetrization of function}). Thus, $\tilde u$ satisfies SIC and is even, and
$
F(\tilde u) = F(u)$. Lemma \ref{l0} is proved.
\end{proof}

\section{Proof of Theorem \ref{t1}}

We say that $u$ satisfies {\it strong SIC}, if there exists $\del > 0$ such that $u'(x_0) < 0$ implies the inequality
\begin{equation}\label{strongSIC1}
\frac{1 - u'^2(x_0)}{-2u'(x_0)} \ge \frac{u(x) - u(x_0)}{x - x_0} + \del \quad \text{for all } \ x_0 < x \le 1
\end{equation}
and $u'(x_0) > 0$ implies the inequality
\begin{equation}\label{strongSIC2}
\frac{1 - u'^2(x_0)}{2u'(x_0)} \ge \frac{u(x) - u(x_0)}{x_0 - x} + \del \quad \text{for all } \ -1 \le x < x_0.
\end{equation}

\begin{lemma}\label{l1-1}
For any even admissible function $u$ there is an even admissible function $\hat u$ {\rm satisfying strong SIC} and such that $\FFF(\hat u) < \FFF(u) + \ve$ and $\hat u(x) < 0$ for $-1 < x < 1$.
\end{lemma}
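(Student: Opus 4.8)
The plan is to perturb a given even admissible $u$ slightly so that all SIC inequalities acquire a uniform gap $\delta$, while changing the functional $\FFF$ by less than $\ve$ and keeping the function strictly negative on $(-1,1)$. The natural device is a small vertical scaling combined with a small "deepening" that pushes the graph strictly below the secant lines appearing in \eqref{fSIC}–\eqref{bSIC}.

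First I would consider, for small $\lambda \in (0,1)$, the rescaled function $u_\lambda(x) = (1-\lambda)\, u(x)$. Since $u$ is even and admissible and $|u'(x)| < 1$ at regular points, $u_\lambda$ is again even, still satisfies $u_\lambda(\pm 1) = 0$ and $u_\lambda(x) < 0$ on $(-1,1)$, and its regular set is the same as that of $u$. Because $u_\lambda' = (1-\lambda) u'$, one checks directly that $u_\lambda$ satisfies SIC whenever $u$ does: multiplying $u$ by $1-\lambda$ scales both the quantity $\frac{1-u'^2}{-2u'}$ upward (as $|u'|$ decreases, $\frac{1-u'^2}{2|u'|}$ increases) and the right-hand secant slopes $\frac{u(x)-u(x_0)}{x-x_0}$ by exactly $1-\lambda$; a short inequality chain then gives that \eqref{fSIC} (resp. \eqref{bSIC}) holds for $u_\lambda$, in fact with a gap proportional to $\lambda$ away from the points where $u'$ is near $0$. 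The subtle region is precisely where $u'(x_0)$ is close to $0$: there $\frac{1-u'^2}{-2|u'|} \to +\infty$, so the SIC inequality is satisfied with an enormous margin anyway, and one only needs a quantitative lower bound on that margin, e.g. $\frac{1-u'^2(x_0)}{2|u'(x_0)|} \ge \frac{1}{2|u'(x_0)|} - \tfrac12 \ge \frac12$ combined with a uniform upper bound $|u(x)| \le \max|u| =: M$ and a lower bound on $|x - x_0|$ — but $|x-x_0|$ can itself be small. To handle that I would split: if $|x - x_0|$ is bounded below by a fixed $\eta$, the secant slope is at most $2M/\eta$, and choosing $\lambda$ so that the scaling gap beats this gives a uniform $\delta$; if $|x-x_0| < \eta$, then since $u$ is differentiable the secant slope is close to $u'(x_0)$ itself, and one compares $\frac{1-u'^2(x_0)}{2|u'(x_0)|}$ with a quantity $\approx |u'(x_0)|$, whose difference is bounded below by a fixed positive constant for $|u'(x_0)| < 1$. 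Thus a single small $\lambda$ yields strong SIC with some $\delta = \delta(\lambda) > 0$ for $u_\lambda$.

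Next I would control the functional. Since $u_\lambda'^2 = (1-\lambda)^2 u'^2 \le u'^2$, we have $\frac{f(x)}{1+u_\lambda'^2(x)} \ge \frac{f(x)}{1+u'^2(x)}$, so a priori $\FFF(u_\lambda) \ge \FFF(u)$, which goes the wrong way; but the increase is continuous in $\lambda$ and vanishes as $\lambda \to 0$. Indeed, $0 \le \FFF(u_\lambda) - \FFF(u) = \int_{-1}^1 f(x)\big(\tfrac{1}{1+u_\lambda'^2} - \tfrac{1}{1+u'^2}\big)\,dx = \int_{-1}^1 f(x)\,\tfrac{(1-(1-\lambda)^2)u'^2}{(1+u_\lambda'^2)(1+u'^2)}\,dx \le (2\lambda)\int_{-1}^1 f(x)\,dx$, using $u'^2 < 1$ and $1 - (1-\lambda)^2 \le 2\lambda$. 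Hence choosing $\lambda < \ve / (2\int_{-1}^1 f)$ gives $\FFF(u_\lambda) < \FFF(u) + \ve$, and we set $\hat u := u_\lambda$. The strict negativity $\hat u(x) < 0$ on $(-1,1)$ is immediate from $\hat u = (1-\lambda) u$.

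The main obstacle I anticipate is the uniformity of the SIC gap near points where $u'(x_0) = 0$, together with the possibility that $u$ is only differentiable (not $C^1$) on a full-measure set, so that "secant slope $\approx u'(x_0)$ when $x$ is close to $x_0$" needs care — one cannot invoke uniform continuity of $u'$. The clean fix is to avoid differentiability of $u'$ entirely: at a regular $x_0$ with $u'(x_0) > 0$, for $x < x_0$ the admissibility (backward SIC) of the original $u$ already gives $\frac{u(x_0) - u(x)}{x_0 - x} \le \frac{1 - u'^2(x_0)}{2 u'(x_0)}$, so scaling by $1-\lambda$ on the left while the right-hand side of the required strong-SIC inequality is the *larger* quantity $\frac{1-(1-\lambda)^2 u'^2(x_0)}{2(1-\lambda) u'(x_0)}$ produces the surplus $\frac{1-(1-\lambda)^2u'^2}{2(1-\lambda)u'} - (1-\lambda)\frac{1-u'^2}{2u'}$, which a direct computation shows equals $\frac{\lambda(2-\lambda)}{2(1-\lambda)u'} + \frac{\lambda u'}{2} \cdot(\text{positive}) \ge \frac{\lambda(2-\lambda)}{2(1-\lambda)} \cdot \frac{1}{u'(x_0)} \ge \frac{\lambda(2-\lambda)}{2(1-\lambda)}$ since $u'(x_0) < 1$; taking $\delta = \frac{\lambda(2-\lambda)}{2(1-\lambda)} > 0$ works uniformly, with no appeal to continuity of the derivative. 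The forward case is symmetric. This dispenses with the obstacle and completes the proof.
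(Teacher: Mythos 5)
Your closing ``clean fix'' is in substance the paper's own argument: the paper takes $\hat u = qu$ with $q=1-\lambda$ and extracts the same surplus, $\frac{1-\hat u'^2(x_0)}{-2\hat u'(x_0)} \ge \frac{1-q^2}{2} + q\,\frac{1-u'^2(x_0)}{-2u'(x_0)}$, so that strong SIC holds with $\del=(1-q^2)/2$ (see \eqref{hatsic1}), and it controls $\FFF(\hat u)-\FFF(u)$ by letting $q\to 1$ just as you do. Your long first paragraph worrying about points where $u'(x_0)$ is near zero is moot: SIC and strong SIC are only imposed at regular points with \emph{nonzero} derivative, and your final algebra covers all such points uniformly, with no appeal to continuity of $u'$. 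The one ingredient you omit is the paper's preliminary flattening: it resets $\hat u$ to a negative constant $c$ on small open neighborhoods of the set where $u$ vanishes, the set where $u'$ fails to exist or be continuous, and the set where $u$ is very deep, at a cost of order $\om$ in the functional. That step is what delivers the conclusion $\hat u(x)<0$ on $(-1,1)$ when admissibility is understood in the sense of \eqref{eq1} (which only requires $u\le 0$) and makes $\hat u'=0$ on the irregular set; under the stricter one-dimensional condition (i), where $u<0$ is assumed from the start, your pure scaling already yields $\hat u<0$, so this is a loss of generality relative to the paper rather than an error in your reasoning. Two small slips worth fixing: in \eqref{bSIC} the relevant quantity is $\frac{u(x)-u(x_0)}{x_0-x}$, not $\frac{u(x_0)-u(x)}{x_0-x}$, and your surplus is exactly $\frac{1-q^2}{2q\,u'(x_0)}$ (the extra ``$\frac{\lambda u'}{2}\cdot(\text{positive})$'' term is zero), though the lower bound you draw from it is correct.
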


\begin{proof}
Take an open full-measure set $\UUU \subset [-1,\, 1]$ where $u$ is continuous, and let $S_1 = \{ x \in \UUU : u(x) = 0\}$ and $S_2 = (-1,\, 1) \setminus \UUU$. Fix a value $\om > 0$ (to be chosen later) and choose a negative value $c$ such that the measure of the set $B_c = \{ x \in \UUU : u(x) \le c \}$ is smaller than $\om$. Then cover the sets $S_1, \, S_2,\, B_c$, respectively, by open sets $\OOO_1,\, \OOO_2,\, \OOO_3$ such that $|\OOO_1 \setminus S_1| < \om, \ |\OOO_2| < \om$, and $|\OOO_3| < \om$, where $|\cdots|$ means Lebesgue measure on the line.

Fix a value $0 < q < 1$ (to be chosen later) and define
$$
\hat u(x) = \left\{
\begin{array}{ll}
q u(x), & \text{if } x \in [-1,\, 1] \setminus (\bar\OOO_1 \cup \bar\OOO_2 \cup \bar\OOO_3)\\
c, & \text{otherwise}.
\end{array}
\right.
$$
Notice that $\hat u'(x) = 0$ for all $x \in \OOO_1 \cup \OOO_2 \cup \OOO_3$. Let us show that $\hat u$ satisfies strong SIC.

Put $\del = (1 - q^2)/2$ and take into account that $|\hat u'(x_0)| < 1$. If $\hat u'(x_0) < 0$ then $\hat u(x) = qu(x)$ and $\hat u'(x) = qu'(x)$, and by \eqref{fSIC}
\begin{multline}\label{hatsic1}
\frac{1 - \hat u'^2(x_0)}{-2\hat u'(x_0)} = \frac{1 - q^2}{-2\hat u'(x_0)} + \frac{q^2 - \hat u'^2(x_0)}{-2\hat u'(x_0)}
\ge \del + q\, \frac{1 - u'^2(x_0)}{-2u'(x_0)}\\
\ge \del + q\, \frac{u(x) - u(x_0)}{x - x_0} = \del + \frac{\hat u(x) - \hat u(x_0)}{x - x_0} \quad \text{for all} \ \, x_0 \le x \le 1,
\end{multline}
and relation \eqref{strongSIC1} for $\hat u$ is proved. The proof of \eqref{strongSIC2} for $\hat u$ in the case $\hat u'(x_0) > 0$ is analogous. Thus, $\hat u$ satisfies strong SIC.

Further, we have
$$
\int_{\bar\OOO_1 \cup \bar\OOO_2 \cup \bar\OOO_3} \left( \frac{f(x)}{1 + \hat u'^2(x)} - \frac{f(x)}{1 + u'^2(x)} \right) dx < 3\om\, f(1).
$$
Taking $\om < \ve/(6f(1))$, we ensure that this expression is smaller than $\ve/2$. The difference
$$
\int_{[-1,\, 1] \setminus \bar\OOO_1 \cup \bar\OOO_2 \cup \bar\OOO_3} \left( \frac{f(x)}{1 + q^2 u'^2(x)} - \frac{f(x)}{1 + u'^2(x)} \right) dx
$$
is smaller than $\ve/2$ for $1-q$ sufficiently small. Thus, for the chosen values of $\om$ and $q$ we get $\FFF(\hat u) - \FFF(u) < \ve$.
\end{proof}

\begin{lemma}\label{l1-2}
For any  $\ve > 0$ and any even admissible function $u$ satisfying strong SIC and such that $u(x) < 0$ for $-1 < x < 1$, there is a piecewise linear even admissible function $\tilde u$ such that $\FFF(\tilde u) < \FFF(u) + \ve$.
\end{lemma}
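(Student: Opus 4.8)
The plan is to produce $\tilde u$ as a continuous piecewise-linear interpolant of $u$ on a sufficiently fine symmetric partition of $[-1,\,1]$, using the margin $\del$ supplied by strong SIC to keep the interpolant admissible. A preliminary point concerns regularity: by definition $u$ is $C^1$ only on an open full-measure set, but we may assume $u$ is in fact piecewise $C^1$ with finitely many pieces (finitely many of which are constant, the rest having $|u'|<1$) and with only finitely many jump points — this is the structure of the function $\hat u$ produced by Lemma~\ref{l1-1} once its auxiliary open sets are chosen to be finite unions of intervals (each of $S_1,S_2,B_c$ sitting inside a compact set of arbitrarily small measure), and in full generality it can be arranged by a preliminary regularization of $u$ that the $\del$-gap renders harmless. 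So I assume $u$ is piecewise $C^1$.

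Next I would fix a symmetric partition $-1=x_{-n}<\dots<x_0=0<\dots<x_n=1$ of mesh $\le\lam$ refining the finitely many junction points, and let $\tilde u$ be linear on each subinterval lying inside a $C^1$-piece (interpolating $u$ at the nodes), equal to $u$ on the constant pieces, and joined across each junction by one additional linear ramp whose slope is a fixed value $<1$ in modulus — this forces the ramp to occupy a horizontal segment of length comparable to the jump, which is accommodated inside the adjacent constant piece (or by shifting a negligible part of the flat portion of $\tilde u$ elsewhere if that piece is too short). Then $\tilde u$ is even, piecewise linear, $\tilde u(\pm1)=u(\pm1)=0$; $\tilde u(x)<0$ on $(-1,1)$ for $\lam$ small since $u<0$ strictly there; $|\tilde u'|<1$ on every linear piece (on an interpolating piece the slope is a value $u'(\xi)$ by the mean value theorem, on a ramp it is the chosen sub-unit value); and $\|\tilde u-u\|_\infty\to0$, hence $\FFF(\tilde u)\to\FFF(u)$, as $\lam\to0$ (the ramps sit in intervals of total length $O(\lam)$ and contribute $O(\lam)$ to the integral), so that $\FFF(\tilde u)<\FFF(u)+\ve$ for $\lam$ small enough.

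The heart of the proof, and the step I expect to be the main obstacle, is checking that $\tilde u$ satisfies SIC when $\lam$ is small. Take a regular $x_0$ with $\tilde u'(x_0)=\s<0$ and a threshold $\eta$ with $\tfrac{1-\eta^2}{2\eta}>1$. If $|\s|\ge\eta$, then $\s=u'(\xi_0)$ with $|\xi_0-x_0|=O(\lam)$; the function $t\mapsto\tfrac{1-t^2}{-2t}$ is Lipschitz on $\{|t|\ge\eta/2\}$, and every right-hand chord slope of $\tilde u$ differs by $O(\lam)$ from the corresponding chord slope of $u$ issuing from $\xi_0$ (chords of length bounded below by a piece length are controlled by $\|\tilde u-u\|_\infty=O(\lam)$, the finitely many short-range chords by the $C^1$-continuity of $u'$); since \eqref{strongSIC1} for $u$ has a genuine gap $\del$, it yields \eqref{fSIC} for $\tilde u$ once $\lam<\del/C$. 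If $|\s|<\eta$, the reflected ray at $x_0$ has slope $>\tfrac{1-\eta^2}{2\eta}>1$ while every chord slope of $\tilde u$ is $\le 1+O(\lam)$ — provided the ramps are arranged so that a steep negative slope is never immediately followed by a steep positive one, which is the single delicate design constraint — so \eqref{fSIC} is automatic. The case $\tilde u'(x_0)>0$ is symmetric via \eqref{bSIC} and \eqref{strongSIC2}. The real difficulty is purely one of uniformity: selecting one mesh size $\lam$ (and one coherent family of ramps) that simultaneously beats the reflected-ray error, the chord-slope error at every scale, the boundary behaviour near $\pm1$, and the self-consistency of the ramps — which is precisely what the $\del$ of strong SIC buys.
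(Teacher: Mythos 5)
Your overall strategy---replace $u'$ by a nearby piecewise constant slope and let the $\del$-margin of strong SIC absorb the resulting $O(\lam)$ perturbation of the chord slopes---is the same as the paper's, and that part of your argument is sound. But two steps you lean on are genuine gaps. First, the reduction to ``$u$ is piecewise $C^1$ with finitely many pieces and finitely many jumps'' is not available: an admissible function is only required to have $u'$ continuous on an open full-measure subset of $[-1,1]$, which can be a countable union of intervals (think of the complement of a fat Cantor set), and the $\hat u$ produced by Lemma~\ref{l1-1} inherits exactly this structure (the sets $\OOO_1,\OOO_2,\OOO_3$ are arbitrary open covers, so the complement of their closures need not be a finite union of intervals). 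Saying it ``can be arranged by a preliminary regularization'' is circular, since such a regularization is precisely what this lemma is supposed to provide. The paper's fix is to \emph{select} finitely many disjoint closed segments $I_i$ of almost-full measure inside the continuity set and to declare $\tilde u$ constant (equal to $\inf_{\cup_i I_i}\tilde u$) on the complement; no structural hypothesis on $u$ is needed, and the discarded set contributes less than $\ve/2$ to $\FFF$ because the integrand is bounded by $f$.

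Second, and more seriously, your insistence that $\tilde u$ be continuous forces the ``ramps,'' and their admissibility is exactly the point you leave open. A ramp's slope is a chosen constant, not a value $u'(\xi)$ near the reflection point, so neither \eqref{strongSIC1} nor the mean-value argument gives you anything there; if the ramp is steep (slope near $-1$, which is what you need to keep its horizontal extent small when bridging an $O(1)$ jump), the reflected ray is nearly horizontal and must clear the whole graph to the right, including the point $(1,0)$---a constraint that genuinely fails for a steep descent deep inside the hollow. The accompanying device of ``shifting a negligible part of the flat portion elsewhere'' rearranges the graph globally and would itself require a fresh SIC check. The paper sidesteps all of this because admissibility here does not require continuity: its $\tilde u$ has vertical jumps (at the ends of the $I_i$ and at $\pm1$), which cost nothing in $\FFF$ and are harmless for SIC since $\tilde u'=0$ on the gaps and chords from $\cup_i I_i$ into the gaps point downward. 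It also chooses the approximating slope $w$ with $|w|\le|u'|$ and $wu'\ge0$, so the reflected ray of $\tilde u$ at each point is at least as steep as that of $u$ \emph{exactly}, leaving only the chord perturbation (controlled by anchoring $\tilde u$ to $u$ at the centre of each $I_i$, with $\s=\del d/2$) to be absorbed by $\del$. Adopting these two devices---discontinuous $\tilde u$ constant on the gaps, and $|w|\le|u'|$---would close your gaps and essentially reproduce the paper's proof.
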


\begin{proof}
Choose a finite set of disjoint closed segments $I_i \subset (-1,\, 1)$ such that $u'$ is continuous on each $I_i$ and $|\cup_i I_i| > 2 - \ve/(2 \max_{[-1,\,1]} f(x))$. We additionally require that this set of segments is symmetric with respect to the point $x = 0$. Take a small $\s > 0$ (to be chosen later) and find an odd piecewise constant function $w(x)$ approximating $u'(x)$ on $\cup_i I_i$, so that $|u'(x)| - \s \le |w(x)| \le |u'(x)|$ and $w(x) u'(x) \ge 0$ for all $x \in \cup_i I_i$.

Define the piecewise linear even function $\tilde u$ as follows: $\tilde u(x)$ is a primitive of $w(x)$ on each $I_i$ and coincides with $u(x)$ at the center of each segment $I_i$. Further, $\tilde u$ coincides with $\inf_{x\in\cup_i I_i} \tilde u(x)$ on $(-1,\, 1) \setminus (\cup_i I_i)$, and $\tilde u(-1) = 0 = \tilde u(1)$. Making $\s$ sufficiently small, one can assure that $\tilde u(x)$ is negative for $-1 < x < 1$.

Take $\s = \s(\del)$ so small that for all $x_0 \in \cup_i I_i$ and $x \in \cup_i I_i \cup \{ -1,\, 1 \}$,
$$
|(\tilde u(x) - \tilde u(x_0)) - (u(x) - u(x_0))| \le \del |x - x_0|.
$$
(It suffices to take $\s = \del d/2$, where $d$ is the length of the smallest connected component of $(-1,\, 1) \setminus (\cup_i I_i)$.) Then for $x \ne x_0$ we have
\begin{equation}\label{eq31}
\left| \frac{\tilde u(x) - \tilde u(x_0)}{x - x_0} - \frac{u(x) - u(x_0)}{x - x_0} \right| \le \del.
\end{equation}
Assume that $\tilde u(x_0) < 0$. Since $0 < -\tilde u'(x_0) \le -u'(x_0) < 0$, we have
$$
\frac{1 - \tilde u'^2(x_0)}{-2\tilde u'(x_0)} \ge \frac{1 - u'^2(x_0)}{-2u'(x_0)},
$$
and taking into account \eqref{hatsic1} and \eqref{eq31}, one concludes that \eqref{fSIC} for $\tilde u$ is satisfied. Relation \eqref{bSIC} in the case $\tilde u(x_0) > 0$ is proved analogously. Notice that substituting $x = -1$ and $x = 1$ in these relations, one obtains $|\tilde u'(x_0)| < 1$.

For $x_0 \in \cup_i I_i$ and $x \in (-1,\, 1) \setminus (\cup_i I_i)$,\, $\tilde u(x) - \tilde u(x_0)$ is negative, and therefore relations \eqref{fSIC} and \eqref{bSIC} for $\tilde u$ are satisfied. Finally, for $x_0 \in (-1,\, 1) \setminus (\cup_i I_i)$ we have $\tilde u'(x_0) = 0$. This implies that $\tilde u$ satisfies SIC, and therefore is admissible.

One has
$$
\int_{[-1,\, 1] \setminus (\cup_i I_i)} f(x)\, dx -
\int_{[-1,\, 1] \setminus (\cup_i I_i)} \frac{f(x)}{1 + u'^2(x)}\, dx \le |(-1,\, 1) \setminus (\cup_i I_i)|\cdot \max_{[-1,\,1]} f(x)) < \ve/2,
$$
and taking $\s$ sufficiently small one obtains
$$
\Big| \int_{\cup_i I_i} \frac{f(x)}{1 + \tilde u'^2(x)}\, dx - \int_{\cup_i I_i} \frac{f(x)}{1 + u'^2(x)}\, dx \Big| < \ve/2.
$$
Thus, $\FFF(\tilde u) < \FFF(u) + \ve$. Lemma \ref{l1-2} is proved.
\end{proof}

The following corollary is a direct consequence of Lemmas \ref{l1-1} and \ref{l1-2}.

\begin{corollary}\label{cor}
For any admissible even function $u$ and $\ve > 0$ there is a piecewise linear admissible even function $\tilde u$ such that $\FFF(\tilde u) < \FFF(u) + \ve$.
\end{corollary}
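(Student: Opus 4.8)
The plan is to obtain $\tilde u$ by simply composing the two preceding lemmas, feeding the output of Lemma \ref{l1-1} directly into Lemma \ref{l1-2}. No new construction is needed. The point is that the technical notion of \emph{strong} SIC, together with strict negativity of the function on $(-1,\,1)$, was introduced in Lemma \ref{l1-1} for exactly this purpose: these are precisely the hypotheses under which the polygonal approximation of Lemma \ref{l1-2} is legitimate, the $\del$-slack in \eqref{strongSIC1}--\eqref{strongSIC2} being what absorbs the error incurred when replacing $u$ by a piecewise linear function.

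Concretely, given an even admissible function $u$ and a tolerance $\ve > 0$, first I would apply Lemma \ref{l1-1} with $\ve/2$ in place of $\ve$. This produces an even admissible function $\hat u$ that additionally satisfies strong SIC, has $\hat u(x) < 0$ for $-1 < x < 1$, and satisfies $\FFF(\hat u) < \FFF(u) + \ve/2$. Observe that these are exactly the standing hypotheses on the function required by Lemma \ref{l1-2}.

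Next I would apply Lemma \ref{l1-2} to $\hat u$, again with tolerance $\ve/2$, obtaining a piecewise linear even admissible function $\tilde u$ with $\FFF(\tilde u) < \FFF(\hat u) + \ve/2$. Chaining the two estimates by the triangle inequality gives $\FFF(\tilde u) < \FFF(\hat u) + \ve/2 < \FFF(u) + \ve/2 + \ve/2 = \FFF(u) + \ve$, and $\tilde u$ enjoys all the required properties (piecewise linear, even, admissible). This is the asserted statement, and it is what reduces the proof of Theorem \ref{t1} to the piecewise linear case.

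There is essentially no obstacle here: the only things to verify are the bookkeeping of the error budget (splitting $\ve$ as $\ve/2 + \ve/2$) and that the conclusion of Lemma \ref{l1-1} matches the hypothesis of Lemma \ref{l1-2} verbatim --- in particular that both the strong SIC property and the strict negativity on $(-1,\,1)$ are carried forward, which they are by the way the two lemmas are stated. Admissibility of $\tilde u$ is likewise guaranteed directly by the statement of Lemma \ref{l1-2}.
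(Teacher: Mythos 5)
Your proof is correct and is exactly the paper's own argument: the corollary is obtained by composing Lemma \ref{l1-1} and Lemma \ref{l1-2} with tolerance $\ve/2$ each, the conclusion of the first matching the hypotheses (strong SIC and strict negativity on $(-1,\,1)$) of the second. Nothing further is needed.
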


\begin{proof}
Indeed, by Lemma \ref{l1-1} there is a function $\hat u(x)$ satisfying strong SIC and such that $\hat u(x) < 0$ for $-1 < x < 1$. Then by Lemma \ref{l1-2} there is a piecewise linear even admissible function $\bar{u}$ such that $\FFF(\tilde u) < \FFF(\tilde u) + \ve/2$. The corollary is proved.
\end{proof}


\begin{lemma}\label{l2}
For each piecewise linear even admissible function $u$ there exists a convex even admissible function $\tilde u$ continuous at $x = 1$ (and therefore on $[-1,\, 1]$) such that $\FFF(\tilde u) \le \FFF(u)$.
\end{lemma}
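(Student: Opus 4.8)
The plan is to build $\tilde u$ by keeping the distribution of the values of $|u'|$ over $[0,1]$ while redistributing the linear pieces of $u$ so that the steepest ones sit near $x=\pm1$; this yields a convex function, and since $f$ is non-decreasing it cannot increase $\FFF$. Concretely, as $u$ is even it is determined by $u|_{[0,1]}$, where $u'$ is a step function with finitely many values; let $v\colon[0,1]\to[0,\infty)$ be its non-decreasing rearrangement — the unique non-decreasing step function with $|\{x:v(x)>\lambda\}|=|\{x:|u'(x)|>\lambda\}|$ for all $\lambda\ge0$ — and put $\tilde u(x)=-\int_{|x|}^{1}v(t)\,dt$ on $[-1,1]$. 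Then $\tilde u$ is piecewise linear and Lipschitz, hence continuous on $[-1,1]$ (in particular at $x=1$), with $\tilde u(\pm1)=0$. On $[0,1]$ we have $\tilde u'=v\ge0$, non-decreasing, so $\tilde u$ is convex there, and since $\tilde u'(0^-)=-v(0^+)\le v(0^+)=\tilde u'(0^+)$ it is convex on all of $[-1,1]$. Also $\int_0^1 v=\int_0^1|u'|>0$ and $v$ is positive on its last piece, so $\tilde u$ is non-decreasing on $[0,1]$ and strictly increasing near $1$; hence $\tilde u(x)<\tilde u(1)=0$ for $x\in[0,1)$ and, by evenness, $\tilde u<0$ on $(-1,1)$. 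Thus $\tilde u$ satisfies (i) and (ii) and is convex and continuous; it remains to check $\FFF(\tilde u)\le\FFF(u)$ and SIC.

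For the functional, evenness of $u$ and $f$ gives $\FFF(u)=2\int_0^1 f(x)(1+|u'(x)|^2)^{-1}\,dx$ and $\FFF(\tilde u)=2\int_0^1 f(x)(1+v(x)^2)^{-1}\,dx$. Since $s\mapsto(1+s^2)^{-1}$ is decreasing on $[0,\infty)$ and $v$ is the non-decreasing rearrangement of $|u'|$, the function $x\mapsto(1+v(x)^2)^{-1}$ is the non-increasing rearrangement of $x\mapsto(1+|u'(x)|^2)^{-1}$; as $f$ is non-decreasing on $[0,1]$, the Hardy--Littlewood rearrangement inequality yields $\int_0^1 f(1+v^2)^{-1}\le\int_0^1 f(1+|u'|^2)^{-1}$, i.e. $\FFF(\tilde u)\le\FFF(u)$.

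It remains to verify SIC for $\tilde u$. Because $\tilde u$ is convex and even with $\tilde u(\pm1)=0$, for any $x_0\in(0,1)$ the difference between $\tilde u$ and the affine function describing the ray reflected at $x_0$ is convex on $[-1,x_0]$ and vanishes at $x_0$; hence that ray stays above $\graph(\tilde u)$ on $[-1,x_0]$ precisely when it also passes above the corner $(-1,0)$, i.e. when
\[
-\tilde u(x_0)\ \le\ \frac{\bigl(1-v(x_0)^2\bigr)(1+x_0)}{2\,v(x_0)}\qquad\text{for all }x_0\in(0,1)\text{ with }v(x_0)>0
\]
(points where $v(x_0)=0$ need no check, and the half $x_0<0$ is symmetric). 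With $p_0:=v(x_0)$ and $-\tilde u(x_0)=\int_{x_0}^1 v$, this is $2p_0\int_{x_0}^{1}v(t)\,dt\le(1-p_0^2)(1+x_0)$. When $p_0\le\sqrt2-1$ it is immediate, since $2p_0\int_{x_0}^1 v\le 2p_0\int_0^1|u'|<2p_0\le1-p_0^2\le(1-p_0^2)(1+x_0)$, using $\int_0^1|u'|<1$ (as $|u'|<1$) and $p_0\le\sqrt2-1$.

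The remaining range $p_0\in(\sqrt2-1,1)$ is the main obstacle. I would attack it via the admissibility of $u$ itself: evaluating backward SIC for $u$ at a regular point $x$ with $u'(x)>0$ at the far endpoint $y=-1$ gives $-u(x)\le(1-u'(x)^2)(1+x)/(2u'(x))$, and likewise forward SIC at a point with $u'(x)<0$ gives $-u(x)\le(1-u'(x)^2)(1-x)/(2|u'(x)|)$. Passing from $|u'|$ to its non-decreasing rearrangement $v$ moves the steep pieces towards $x=\pm1$, where the factor $1\pm x$ controlling these constraints is largest while the remaining depth $\int_{x_0}^1 v$ is smallest; so these pointwise bounds for $u$ ought to force the displayed inequality for $\tilde u$, possibly after first reducing to the case where $u$ is monotone on $[0,1]$ (so that only backward SIC is active). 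Turning this heuristic into a rigorous estimate is the only non-routine step.
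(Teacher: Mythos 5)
Your construction of $\tilde u$ (sort the absolute slopes of the linear pieces so that the steepest ones sit near $x=\pm1$) and your resistance comparison via the Hardy--Littlewood rearrangement inequality coincide with the paper's. Your reduction of SIC for the convex, even $\tilde u$ to the single corner condition
\[
2\,v(x_0)\int_{x_0}^{1} v(t)\,dt\ \le\ \bigl(1-v(x_0)^2\bigr)(1+x_0)
\]
is correct and is a cleaner formulation of what must be shown. But proving that inequality for $v(x_0)>\sqrt2-1$ \emph{is} the content of the lemma, and you have not done it: you say yourself that turning the heuristic into an estimate is ``the only non-routine step''. The heuristic does not obviously close. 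Rearrangement decouples the location of a steep piece from the depth of $u$ at that location, so the pointwise SIC inequalities for $u$ --- which bound $-u(x)$ by $(1-u'(x)^2)(1\pm x)/(2|u'(x)|)$ at the \emph{original} position $x$ of each piece --- do not convert termwise into a bound on $\int_{x_0}^1 v$, which aggregates the lengths and slopes of the steepest pieces irrespective of where they originally lay. Moreover, which of the two SIC inequalities is available for a given piece of $u$ depends on the sign of $u'$ there, and an even admissible $u$ need not be monotone on $[0,1]$; your proposed preliminary reduction ``to the case where $u$ is monotone on $[0,1]$'' is itself another unproved rearrangement claim of exactly the same kind.

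The paper fills precisely this gap by performing the sorting one transposition at a time: at the $k$th step the $k$th steepest segment is moved to the front, and a finite induction shows that each single swap preserves forward SIC. The verification is purely geometric (compare the rays reflected at the two endpoints of the swapped block before and after the swap, rays $1$ and $2$ in Fig.~5) and uses only that the segment being moved is at least as steep as everything already placed before it; it also automatically handles the vertical jump vectors of the generalized graph, which your $\tilde u(x)=-\int_{|x|}^1 v$ silently discards. If you wish to keep your corner-condition formulation, you still need a global or inductive argument of this type; the pointwise SIC bounds for $u$ alone will not deliver the displayed inequality.
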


\begin{proof}
The generalized graph of $u$ is a broken line; let $p_0 = (-1,0),\, p_0' = (-1, u(0^+)),\, p_1,\, p_1', \ldots,\, p_{n-1},$ $p_{n-1}',\, p_n = (1, u(1^-)),\, p_n' = (1,0)$ be its vertices, with $p_i = (x_i, y_i)$ and $p_i' = (x_i, y_i')$. Thus, the points $x_0 = -1,\, x_1, \ldots,$ $x_{n-1},\, x_n = 1$ are singular points of $u$; in general $u$ has discontinuities at these points, and $y_i = u(x_i-0)$ and $y_i' = u(x_i+0)$. The part of the graph of $u$ corresponding to an interval $(x_{i-1},\, x_i)$ is the segment with the endpoints $p_{i-1}'$ and $p_i$.

Denote $\vec v_i = p_i - p_{i-1}'$ and $\vec w_i = p_i' - p_i$; then the generalized graph of $u$ is represented by the sequence of vectors $\vec w_0,\, \vec v_1,\, \vec w_1, \ldots, \vec v_{n},\, \vec w_{n}$. The vectors $\vec v_i$ are directed to the right, and the vectors $\vec w_i$ are directed vertically (upward or downward). One obviously has $\vec w_{0} + \vec v_1 + \vec w_1 + \ldots +\vec v_n + \vec w_{n} = (2,0)$.

Since $u$ is even, $p_i'$ is symmetric to $p_{n-i}$ with respect to the vertical line $x = 0\ (i = 0,\, 1, \ldots, n)$. Without loss of generality we assume that $n$ is even. Otherwise the middle segment of the broken line (which is necessarily horizontal and symmetric with respect to the line $x = 1/2$) is represented as the union of two segments of equal length.

Denote by $\kap_i = \frac{y_i - y_{i-1}'}{x_i - x_{i-1}}$ the slope ratio of $\vec v_i$, and place the ratios in the increasing order,
$$
\kap_{i_1} \le \ldots \le \kap_{i_{n/2}} \le 0 \le \kap_{i_{{n/2}+1}} \le \ldots \le \kap_{i_n}.
$$
One obviously has $\kap_{i_j} = -\kap_{i_{n+1-j}}$, and the vectors $\vec v_{i_{j}}$ and $\vec v_{i_{n+1-j}}$ are symmetric with respect to the horizontal line for all $j$.

In what follows we use the notation $[\vec w_{0},\, \vec v_1,\, \vec w_1, \ldots, \vec v_n,\, \vec w_{n}]_{j_1,\ldots,j_m}$ for the sequence obtained from $\vec w_{0},\, \vec v_1,\, \vec w_1, \ldots, \vec v_n,\, \vec w_{n}$ by (a) removing the elements $\vec v_{j_1}, \ldots, \vec v_{j_m}$ and (b) substituting each maximal subsequence of consecutive vectors $\vec w_\al,\, \vec w_\bt, \ldots, \vec w_\gam$ in the resulting sequence by their sum $\vec w_\al + \vec w_\bt + \ldots + \vec w_\gam$.

The oriented broken line with the initial vertex at the point $(-1, 0)$ and with the sequence of edges
$$
\vec v_{i_1}, \ldots, \vec v_{i_{n/2}},\, [\vec w_{0},\, \vec v_1,\, \vec w_1, \ldots, \vec v_n,\, \vec w_{n}]_{i_1,\ldots,i_{n/2}}
$$
is the generalized graph of a function; let it be denoted by $\tilde u_1$.
One easily verifies that $\tilde u_1$ is negative on $(-1,\, 1)$, continuous at $-1$, and equal to zero at $-1$ and 1.
Since the graph of $\tilde u_1$ is obtained by permutation of parts (segments) of the graph of $u$, one has $F(\tilde u_1) = F(u)$.
Since the slopes of the vectors $\vec v_{i_1}, \ldots, \vec v_{i_{n/2}}$ are non-decreasing, $\tilde u_1$ is convex on $[-1,\, 0]$.

The function $\tilde u$ is defined by $\tilde u(x) = \tilde u_1(-|x|)$. It is even and convex, and is continuous at $x = 1$. The graph of $\tilde u$ is formed by the sequence of vectors with non-decreasing slopes $\vec v_{i_1}, \ldots, \vec v_{i_{n}}$. Introduce the notation
$$
g(x) = \frac{1}{1 + u'^2(x)} \quad \text{and} \quad g^*(x) = \frac{1}{1 + \tilde u'^2(x)}.
$$
Obviously, the function $g^*$ is even and monotone non-increasing on $[0,\, 1]$. Let $\Del_i$ be the $x$-component of the vector $\vec v_i$; that is, $\Del_i = x_i - x_{i-1}$. Notice that the function $f$ coincides with its symmetric increasing rearrangement, while $g^*$ is the symmetric decreasing rearrangement of $g$. Indeed, for
$$\frac{1}{1 + \kappa_{i_{j}}^2} \le c < \frac{1}{1 + \kappa_{i_{j+1}}^2} \quad \quad (1 \le j \le n/2 - 1)$$
the standard linear measure of the set $\{ x : g^*(x) \le c \}$ coincides with the measure of the set $\{ x : g(x) \le c \}$ and equals $\Del_{i_1} + \ldots + \Del_{i_{j}} + \Del_{i_{n+1-j}} + \ldots \Del_{i_n}$. For $c < 1/(1 + \kappa_{i_{j}}^2)$ both sets are empty, and for $c \ge 1/(1 + \kappa_{i_{n/2}}^2)$ both sets coincide with $[-1,\, 1]$. Then by the 1-dimensional Hardy-Littlewood inequality one has
$$
\int_{-1}^1 f(x) g(x)\, dx \ge \int_{-1}^1 f(x) g^*(x)\, dx,
$$
and so, inequality $\FFF(u) \ge \FFF(\tilde u)$ is proved.

It remains to check condition SIC for $\tilde u$. We shall first prove that $\tilde u_1$ satisfies forward SIC.

The proof is done by (finite) induction. Define successive permutations in the sequence of vectors, starting from $\vec w_{0},\, \vec v_1,\, \vec w_1, \ldots, \vec v_n,\, \vec w_{n}$. At the 1st step put the (most declining) vector $\vec v_{i_1}$ on the first place, thus obtaining the sequence $\vec v_{i_1},\, [\vec w_{0},\, \vec v_1,\, \vec w_1, \ldots, \vec v_n,\, \vec w_{n}]_{i_1}$. At the 2nd step we put the vector $\vec v_{i_2}$ on the second place, thus obtaining the sequence $\vec v_{i_1},\, \vec v_{i_2},\, [\vec w_{0},\, \vec v_1,\, \vec w_1, \ldots,\, \vec v_n,\, \vec w_{n}]_{i_1,\, i_2}$. Repeating the procedure several times, at the $(n/2)$th step we finally obtain the sequence $\vec v_{i_1}, \ldots,$ $\vec v_{i_{n/2}},\, [\vec w_{0},\, \vec v_1,\, \vec w_1, \ldots, \vec v_n,\, \vec w_{n}]_{i_1,\ldots,i_{n/2}}$.

Let the function obtained at the $k$th step be denoted by $u_k$. Thus, one obtains the finite sequence of functions $u_0 = u,\, u_1, \ldots, u_{n/2-1},\, u_{n/2} = \tilde u_1$. By the hypothesis of the lemma, $u$ satisfies forward SIC. Assume that $u_{k-1} \, (1 \le k \le n/2)$ satisfies forward SIC; we are going to prove that $u_{k}$ also does.

The sequences representing $u_{k-1}$ and $u_{k}$ can be written down as
$$
\vec v_{i_1}, \ldots, \vec v_{i_{k-1}}; \ [\vec w_{0},\, \vec v_1,\, \vec w_1 \ldots, \vec w_{i_{k}-2},\, \vec v_{i_{k}-1}]_{i_1,\ldots,i_{k-1}},
\vec w_{i_{k}-1},\, \vec v_{i_{k}},\, \vec w_{i_{k}};
$$
$$
[\vec v_{i_{k}+1},\, \vec w_{i_{k}+1} \ldots, \vec v_n,\, \vec w_{n}]_{i_1,\ldots,i_{k-1}}
$$
and
$$
\vec v_{i_1}, \ldots, \vec v_{i_{k-1}}; \ \vec v_{i_{k}}, [\vec w_{0},\, \vec v_1,\, \vec w_1 \ldots, \vec w_{i_{k}-2},\, \vec v_{i_{k}-1}]_{i_1,\ldots,i_{k-1}},
\vec w_{i_{k}-1} + \vec w_{i_{k}};
$$
$$
[\vec v_{i_{k}+1},\, \vec w_{i_{k}+1} \ldots, \vec v_n,\, \vec w_{n}]_{i_1,\ldots,i_{k-1}}.
$$
We see that both sequences can be divided into 3 parts. At the $k$th step of induction the initial and final parts remain unchanged, and the subsequence $[\vec w_{0},\, \vec v_1,\, \vec w_1 \ldots, \vec w_{i_{k}-2},\, \vec v_{i_{k}-1}]_{i_1,\ldots,i_{k-1}}$ is exchanged with the vector $\vec v_{i_{k}}$ in the middle part.

\begin{center}
\parbox{13cm}{\begin{picture}(0,250)

\rput(2.5,6.8){
\scalebox{1.2}{
\rput(-1.2,0.7){\scalebox{1.5}{$u_{k-1}$}}
\rput(-2,-1.5){\scalebox{0.85}{(a)}}
  \psline[linewidth=0.67pt,linecolor=red,arrows=->,arrowscale=1.5](1,0.7)(1,-0.8)
  \psline[linewidth=0.67pt,linecolor=red,arrows=->,arrowscale=1.5](1,-0.8)(1,-0.9)(3.2,-0.405)
    \rput(3.45,-0.4){\scalebox{0.83}{\red \rput(0.1,0.13){$\bigcirc$}1}}
  \psline[linewidth=0.67pt,linecolor=red,arrows=->,arrowscale=1.5](3,0.7)(3,-1.2)
  \psline[linewidth=0.67pt,linecolor=red,arrows=->,arrowscale=1.5](3,-1.2)(3,-1.32)(5,-0.25)
    \rput(5.25,-0.25){\scalebox{0.83}{\red \rput(0.1,0.13){$\bigcirc$}2}}
\rput(1.18,-0.65){\scalebox{0.8}{A}}
\rput(3.18,-1){\scalebox{0.8}{B}}
\rput(2.46,-0.78){\scalebox{0.8}{C}}
\rput(0.03,-0.33){\scalebox{0.8}{$\vec v_{i_1}$}}
\rput(0.6,-0.9){\scalebox{0.8}{$\vec v_{i_{k-1}}$}}
\rput(2.7,-1.45){\scalebox{0.8}{$\vec v_{i_{k}}$}}
\psdots(1,-0.9)(2.5,-1.02)(3,-1.32)
\psdots[dotsize=2pt](0,0)(8,0)(0.5,-0.5)
  \psline[linestyle=dotted,linewidth=0.6pt,dotsep=2.2pt](1,0)(1,-0.9)
  \psline[linestyle=dotted,linewidth=0.6pt,dotsep=2.2pt](3,0)(3,-1.32)
  \rput(1.1,0.2){\scalebox{0.83}{$a$}}  \rput(3.1,0.2){\scalebox{0.83}{$b$}}
  \rput(0.05,0.25){\scalebox{0.8}{$-1$}}  \rput(8.1,0.25){\scalebox{0.83}{1}}
\psline[linestyle=dotted,linewidth=0.6pt,dotsep=2.2pt](0,0)(8,0)
\psline[linewidth=0.67pt](0,0)(0.5,-0.5)(1,-0.9)
\psline[linewidth=0.67pt](2.5,-1.02)(3,-1.32)
   \rput(0,-0.2)
   {\psline[linewidth=0.67pt](1,-0.9)(1.3,-1)(1.6,-0.95)(1.9,-1.05)(2.2,-0.93)(2.5,-0.91)}
\psline[linestyle=dashed,linewidth=0.67pt](3,-1.45)(3.2,-1.45)(3.6,-1.15)(4.2,-1.25)(4.9,-0.9)(5.6,-0.9)(6.4,-0.5)(7,-0.6)(8,0)
}}

\rput(2.5,2.3){
\scalebox{1.2}{
\rput(-1,0.2){\scalebox{1.5}{$u_{k}$}}
\rput(-2,-1.5){\scalebox{0.85}{(b)}}
  \psline[linewidth=0.67pt,linestyle=dashed,linecolor=red,arrows=->,arrowscale=1.5](1.5,-1.2)(3.5,-0.75)
  \rput(3.7,-0.7){\scalebox{0.83}{\red \rput(0.1,0.13){$\bigcirc$}1}}
  \psline[linewidth=0.67pt,linecolor=red,linestyle=dashed,arrows=->,arrowscale=1.5](3,-1.32)(5,-0.25)
    \rput(5.2,-0.25){\scalebox{0.83}{\red \rput(0.1,0.13){$\bigcirc$}2}}
  \psline[linewidth=0.67pt,linecolor=red,arrows=->,arrowscale=1.5](1.5,0.7)(1.5,-1.1)
  \psline[linewidth=0.67pt,linecolor=red,arrows=->,arrowscale=1.5](1.5,-1.1)(1.5,-1.2)(3.5,-0.13)
\rput(1.22,-0.73){\scalebox{0.8}{A}}
\rput(2.8,-1.15){\scalebox{0.8}{B}}
\rput(1.35,-1.35){\scalebox{0.8}{D}}
\rput(0.45,-0.84){\scalebox{0.8}{$\vec v_{i_{k-1}}$}}
\rput(1.05,-1.15){\scalebox{0.8}{$\vec v_{i_{k}}$}}
\psdots(1,-0.9)(3,-1.32)(1.5,-1.2)
\psdots[dotsize=2pt](0,0)(8,0)(0.5,-0.5)
  \psline[linestyle=dotted,linewidth=0.6pt,dotsep=2.2pt](1,0)(1,-0.9)
  \psline[linestyle=dotted,linewidth=0.6pt,dotsep=2.2pt](3,0)(3,-1.32)
  \rput(1,0.2){\scalebox{0.83}{$a$}}  \rput(3,0.2){\scalebox{0.83}{$b$}}
  \rput(0.05,0.25){\scalebox{0.8}{$-1$}}  \rput(8.1,0.25){\scalebox{0.83}{1}}
\psline[linestyle=dotted,linewidth=0.6pt,dotsep=2pt](0,0)(8,0)
\psline[linewidth=0.67pt](0,0)(0.5,-0.5)(1,-0.9)(1.5,-1.2)
\rput(0.5,-0.3){
   \rput(-0.05,-0.2)
   {\psline[linewidth=0.67pt](1,-0.9)(1.3,-1)(1.6,-0.95)(1.9,-1.05)(2.2,-0.93)(2.5,-0.91)}}
\psline[linestyle=dashed,linewidth=0.67pt](3,-1.45)(3.2,-1.45)(3.6,-1.15)(4.2,-1.25)(4.9,-0.9)(5.6,-0.9)(6.4,-0.5)(7,-0.6)(8,0)
}}
\end{picture}}

\figure \label{fig:permutation}
Permutation of the sequence of edges.
\end{center}

The interval $[a, b] \subset [-1,\, 1]$ (see Fig.~\ref{fig:permutation}) corresponds to the subsequence $[\vec w_{0},\, \vec v_1,\, \vec w_1 \ldots, \vec w_{i_{k}-2},$ $\vec v_{i_{k}-1}]_{i_1,\ldots,i_{k-1}},\, \vec w_{i_{k}-1},\, \vec v_{i_{k}},\, \vec w_{i_{k}}$ in figure (a), and to the permuted subsequence $\vec v_{i_{k}},\, [\vec w_{0},\, \vec v_1,\, \vec w_1 \ldots,$ $\vec w_{i_{k}-2},$ $\vec v_{i_{k}-1}]_{i_1,\ldots,i_{k-1}}$, $\vec w_{i_{k}-1} + \vec w_{i_{k}}$ in figure (b). As a result of permutation, the function remains unchanged outside $[a, b]$; that is, $u_k(x) = u_{k-1}(x)$ for $x \in [-1, a] \cup [b, 1]$. Since $u_{k-1}$ satisfies forward SIC, we conclude that forward SIC is satisfied for the function $u_{k}$ on the interval $[b, 1]$. It remains to check the intervals $[a, b]$ and $[-1,\, a]$.

The broken line corresponding to the subsequence $[\vec w_{0},\, \vec v_1,\, \vec w_1 \ldots, \vec w_{i_{k}-2}$, $\vec v_{i_{k}-1},\, \vec w_{i_{k}-1}]_{i_1,\ldots,i_{k-1}}$ and the vector $\vec v_{i_{k}}$ are denoted, respectively, by $AC$ and $CB$ in Fig.~\ref{fig:permutation}\,(a), and by $DB$ and $AD$ in Fig.~\ref{fig:permutation}\,(b). We are going to show that forward SIC is satisfied, first, on the segment $AD$, and second, on the broken line $DB$ corresponding to the function $u_k$ (see Fig.~\ref{fig:permutation}\.(b)).

The ray reflected from $\vec v_{i_{k-1}}$ at the point $A$ in figure (a) is denoted by \rput(0.1,0.13){$\bigcirc$}1\,. (If $k = 1$ then by definition the ray is horizontal with the vertex at $(-1,0)$ and is directed to the right.) It is situated above the generalized graph of $u_{k-1}$, and therefore above the point $C$. The ray with the same direction and with the vertex at $D$ in figure (b) is also denoted by \rput(0.1,0.13){$\bigcirc$}1\,. Since the broken line $DB$ is a translation of $AC$, the ray is situated above the broken line $DB$, and in particular above~$B$.

The ray reflected from $\vec v_{i_{k}}$ at the point $B$ in figure (a) and the same ray in figure (b) are denoted by \rput(0.1,0.13){$\bigcirc$}2\,. They are situated above the graphs of the functions (which are identical to the right of $B$ in figures (a) and (b)).

The slope of $\vec v_{i_{k}}$ is less than that of $\vec v_{i_{k-1}}$, and so, the ray reflected from $\vec v_{i_{k}}$ at $D$ in figure (b) is situated above the ray \rput(0.1,0.13){$\bigcirc$}1\,, and therefore above $B$. This ray is parallel to the ray \rput(0.1,0.13){$\bigcirc$}2\,, and hence is situated above it. Thus, this ray, and therefore any other ray reflected from $\vec v_{i_{k}}$ in figure (b), are situated above the graph. Thus, forward SIC for $AD$ is proved.

Further, each ray reflected at a point with negative slope between $D$ and $B$ in figure (b), goes above the part $DB$ of the graph, and therefore above $B$, and its slope is greater than that of the ray \rput(0.1,0.13){$\bigcirc$}2\,. Therefore it is situated above the rest of the graph, and forward SIC for $DB$ is also proved.

Each ray reflected by the broken line $[-1]A$ in figure (a) lies above the broken line $AC$, the segment $CB$, and the broken line $B[1]$ (the dashed line). Now consider the same ray in figure (b). Its slope is positive, it lies above the point $A$, and the slope of the segment $AD$ is negative, therefore the ray lies above $AD$. Further, the broken line $DB$ is obtained by shifting $AC$ by the vector $\overrightarrow{AD}$, therefore it is situated below the ray. Finally, the ray lies above the rest $B[1]$ of the graph. This implies that the ray lies above the graph of $u_k$.

Thus, we have verified that the function $u_{k}$ satisfies forward SIC. The proof by induction is complete.

Thus, $\tilde u_1$ satisfies forward SIC, and therefore each ray reflected from the part of $\graph(\tilde u_1)$ corresponding to $[-1,\, 0]$ goes above the point $(1,0)$. Since $\tilde u$ is convex and coincides with $\tilde u_1$ on $[-1,\, 0]$, we obtain that this ray also lies above $\graph(\tilde u)$, and $\tilde u$ satisfies forward SIC. From symmetry considerations we conclude that $\tilde u$ satisfies SIC.
\end{proof}

Consider a real value $x_0$ and a monotone non-decreasing function $u : [a,\, b] \to \RRR$ with $(x_0 + b)/2 \le a < b$,\, $u(b) \le 0$ and such that each particle reflected from $\graph(u)$ goes above the point $(x_0, 0)$. In analytical terms the latter condition means that
\begin{equation}\label{above_zero}
\frac{-2u(x)}{x - x_0} \le \frac{1 - u'^2(x)}{u'(x)}
\end{equation}
at each point $a < x < b$ where $u'(x)$ exists and is nonzero.

Notice that since $u$ is monotone, $u'$ is a measurable function defined almost everywhere.

Next we define the non-decreasing function $u_{(0)} : [a,\, b] \to \RRR$ such that $\graph(u_{(0)})$ coincides with an arc of the parabola through $(b, u(b))$ with focus at $(x_0,0)$ and with the vertical axis (see Fig.~\ref{fig:exlem30}). This means that $u_{(0)}$ has the form $u_{(0)}(x) = ((x - x_0)^2 - p^2)/(2p)$, where the positive value $p$ is determined from the condition $u_{(0)}(b) = u(b)$. In particular, we have
$$
\frac{-2u_{(0)}(x)}{x - x_0} = \frac{1 - u_{(0)}'^2(x)}{u_{(0)}'(x)};
$$
that is, {each} particle reflected from $\graph(u_{(0)})$ {\it passes through} the origin.

\begin{center}
\parbox{13cm}{\begin{picture}(0,140)
\rput(2.7,3.5){
\scalebox{0.8}{
 \psecurve[linewidth=0.3pt,linestyle=dashed,linecolor=blue]
 (2,-4.8)(3,-4.55)(4,-4.2)(5,-3.75)(6,-3.2)(7,-2.55)(8,-1.8)(9,-0.95)(10,0)(11,1.05)
 \psdots[dotsize=2.2pt](6,0)(9,0)(0,0)
   \rput(6,0.35){\scalebox{1.2}{$a$}}
\rput(-0.1,0.35){\scalebox{1.2}{$x_0$}}
   \rput(9,0.35){\scalebox{1.2}{$b$}}
     \rput(7.75,-3.1){\scalebox{1.2}{$y = u_{(0)}(x)$}}
     \rput(5,-2.8){\scalebox{1.2}{$y = u(x)$}}
\rput(11.5,0){\scalebox{1.2}{$x$}}
\psline[linewidth=1pt,linestyle=dotted](9,0)(9,-0.95)
\psline[linewidth=0.4pt,arrows=->,arrowscale=1.5](-0.5,0)(11,0)
\psline[linewidth=1pt,linestyle=dotted](6,0)(6,-3.2)

\psline[linewidth=0.5pt,linecolor=red,arrows=->,arrowscale=1.7](6.56,1.6)(6.56,-2.8)
\psline[linewidth=0.5pt,linecolor=red,arrows=->,arrowscale=1.7](6.56,-2.8)(0.05,-0.0375)
\psline[linewidth=0.5pt,linecolor=red,arrows=->,arrowscale=1.7](7,1.6)(7,-2.31)
\psline[linewidth=0.5pt,linecolor=red,arrows=->,arrowscale=1.7](7,-2.31)(3.5,1)

     \pscurve
     (6,-2.76)(7,-2.31)(8,-1.71)(9,-0.95)
     \psecurve
     (5,-3.75)(6,-3.2)(7,-2.55)(8,-1.8)(9,-0.95)(10,0)

      \scalebox{0.85}{
  \rput(-0.14,0){\psecurve[linewidth=0.353pt,linestyle=dashed,linecolor=blue]
 (2,-4.8)(3,-4.55)(4,-4.2)
 (5,-3.75)(6,-3.2)(7,-2.55)(8,-1.8)(9,-0.95)(10,0)(11,1.05)
 }}
     \scalebox{0.95}{
  \rput(-0.4,0){\psecurve[linewidth=0.316pt,linestyle=dashed,linecolor=blue]
 (2,-4.8)(3,-4.55)(4,-4.2)
 (5,-3.75)(6,-3.2)(7,-2.55)(8,-1.8)(9,-0.95)(10,0)(11,1.05)
 }}
     \scalebox{0.8}{
  \rput(-0.78,0){\psecurve[linewidth=0.375pt,linestyle=dashed,linecolor=blue]
 (2,-4.8)(3,-4.55)(4,-4.2)(5,-3.75)(6,-3.2)(7,-2.55)(8,-1.8)(9,-0.95)(10,0)(11,1.05)
 }}
     \scalebox{0.9}{
  \rput(-0.99,0){\psecurve[linewidth=0.33pt,linestyle=dashed,linecolor=blue]
 (2,-4.8)(3,-4.55)(4,-4.2)(5,-3.75)(6,-3.2)(7,-2.55)(8,-1.8)(9,-0.95)(10,0)(11,1.05)
 }}     }}
\end{picture}}
\vskip 0.4cm

\figure \label{fig:exlem30}
The functions $u$ and $u_0$ and two particles reflected from $\graph(u_0)$ and $\graph(u)$.
\end{center}

\begin{remark}\label{zam6}
In the particular case where $x_0 = -1$,\, $a \ge 0$,\, $u(b) = 0$,\, $b = 1$ the corresponding function $u_{(0)}(x)$ coincides with $u_0(x) = ((x+1)^2 - 4)/4$ on $[a,\, 1]$.
\end{remark}

The following lemma states in particular that the resistance of $u_{(0)}$ is smaller than the resistance of $u$.

Let $\varphi : [a,\, b] \to \RRR$ be a differentiable non-negative non-decreasing function.

\begin{lemma}\label{l3-0}
We have
$$
\int_a^{b} \frac{\varphi(x)}{1 + u_{(0)}'^2(x)}\, dx \le \int_a^{b} \frac{\varphi(x)}{1 + u'^2(x)}\, dx.
$$
\end{lemma}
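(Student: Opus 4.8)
The plan is to reduce the inequality to a pointwise comparison of the integrands after a suitable change of variable. The key observation is that both $u$ and $u_{(0)}$ are monotone non-decreasing on $[a,b]$ with the same right endpoint value $u(b)=u_{(0)}(b)$, and both satisfy the ``stays above $(x_0,0)$'' condition, but $u_{(0)}$ realizes this with equality. I would first record the geometric meaning of \eqref{above_zero}: writing $\theta(x)$ for the angle the reflected ray at $(x,u(x))$ makes with the horizontal, the condition says the reflected ray passes on or above $(x_0,0)$, and for $u_{(0)}$ it passes exactly through $(x_0,0)$. Since $u(b)=u_{(0)}(b)$ and the rays from the common endpoint both must clear (resp.\ hit) $(x_0,0)$, one gets $u(x)\le u_{(0)}(x)$ on all of $[a,b]$: indeed $u$ lies below the parabola because otherwise a reflected ray from a point of $\graph(u)$ lying above the parabola would be forced below $(x_0,0)$, contradicting \eqref{above_zero} (this uses convexity of the parabolic arc and that $a\ge (x_0+b)/2$, so the relevant arc is genuinely ``rising'' and the focal property applies).

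Next I would compare derivatives in an integrated sense. The natural device is to parametrize each graph by the slope of the reflected ray, or equivalently by $u'$ itself. From \eqref{above_zero}, at a point $x$ where $u'(x)\ne 0$,
\[
\frac{-2u(x)}{x-x_0}\le \frac{1-u'^2(x)}{u'(x)},
\]
and since $-u(x)\ge -u_{(0)}(x)>0$ and $x-x_0>0$, the left side is at least $\frac{-2u_{(0)}(x)}{x-x_0}=\frac{1-u_{(0)}'^2(x)}{u_{(0)}'(x)}$; because $t\mapsto (1-t^2)/t$ is decreasing on $(0,1)$ (recall $0<u'<1$, $0<u_{(0)}'<1$), this forces $u'(x)\le u_{(0)}'(x)$ \emph{at every such point}, and trivially where $u'(x)=0$. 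Thus $u'(x)\le u_{(0)}'(x)$ a.e.\ on $[a,b]$. Consequently $\dfrac{1}{1+u'^2(x)}\ge \dfrac{1}{1+u_{(0)}'^2(x)}$ pointwise a.e. — but this is the \emph{wrong} direction for a pointwise argument since $\varphi\ge 0$. So a straight pointwise bound fails, and one must exploit that $u'$ and $u_{(0)}'$ are both small precisely where $x$ is small (near $a$) and large near $b$, matched against the non-decreasing weight $\varphi$.

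The main step, and the main obstacle, is therefore a rearrangement/Hardy--Littlewood argument at the level of the two monotone functions. I would argue as follows. Both $\frac{1}{1+u'^2}$ and $\frac{1}{1+u_{(0)}'^2}$ are, up to modification on a null set, \emph{non-increasing} in $x$ on $[a,b]$: for $u_{(0)}$ this is immediate since $u_{(0)}'(x)=(x-x_0)/p$ is increasing; for $u$ one shows $u'$ is essentially monotone by combining \eqref{above_zero} with monotonicity of $u$ — or, more robustly, one passes first to the decreasing rearrangement $g^{*}$ of $g=\frac{1}{1+u'^2}$ and notes (as in the proof of Lemma~\ref{l2}) that replacing $g$ by $g^{*}$ does not increase $\int \varphi g$ by Hardy--Littlewood, so we may assume $g$ non-increasing from the outset. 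Now both $g$ and $g_{(0)}:=\frac{1}{1+u_{(0)}'^2}$ are non-increasing on $[a,b]$, take values in $(1/2,1]$, and satisfy $g(x)\le g_{(0)}(x)$ nowhere-pointwise but instead satisfy the integral constraint coming from $u(b)=u_{(0)}(b)$: writing $u(a)=u(b)-\int_a^b u'$, and $u(x)\le u_{(0)}(x)$ for all $x$, one gets $\int_a^x u'(t)\,dt \ge \int_a^x u_{(0)}'(t)\,dt$ for every $x$, i.e.\ the increasing function $u'$ \emph{majorizes} $u_{(0)}'$ in the sense of partial integrals from $b$ downward. Translating through the decreasing bijection $t\mapsto \frac{1}{1+t^2}$, this says $g$ is dominated by $g_{(0)}$ in the majorization order (same total, but $g$ more ``spread out,'' with its mass of ``large values'' pushed toward $x=b$ and mass of ``small values'' toward $x=a$). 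Against the non-decreasing weight $\varphi$, majorization in this direction forces $\int_a^b \varphi g_{(0)}\le \int_a^b \varphi g$, which is exactly the claim. The delicate point to get right is the precise formulation and proof of this last majorization inequality — that a non-decreasing weight integrated against a non-increasing function is minimized, among functions with a prescribed ``cumulative from the right'' bound and a fixed integral, by the extremal profile $g_{(0)}$; I would prove it by an Abel-summation/integration-by-parts identity, writing $\int_a^b \varphi(g-g_{(0)})=\int_a^b \varphi'(x)\Big(\int_x^b (g_{(0)}-g)\Big)dx + \varphi(a)\int_a^b(g-g_{(0)})$ and observing that $\varphi'\ge 0$, $\varphi(a)\ge 0$, $\int_a^b(g-g_{(0)})=0$ (equal totals, from $u(b)=u_{(0)}(b)$ together with $u(a)=u_{(0)}(a)$ which one checks separately — or, if $u(a)\ne u_{(0)}(a)$, the total is $\ge 0$, which only helps), and $\int_x^b(g_{(0)}-g)\ge 0$ by the pointwise inequality $u'\le u_{(0)}'$ integrated. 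With all signs lined up, every term is non-negative and the lemma follows.
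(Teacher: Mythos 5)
Your proof rests on two pointwise comparisons, and both are false; worse, they contradict each other. The claim $u(x)\le u_{(0)}(x)$ does not follow from \eqref{above_zero}: that condition constrains the pair $(u(x),u'(x))$ at each individual point but says nothing about the position of $\graph(u)$ relative to the parabola through the endpoint. A concrete counterexample is $u\equiv u(b)$ (constant): it is monotone, satisfies \eqref{above_zero} vacuously since $u'\equiv 0$, yet $u(x)=u(b)=u_{(0)}(b)>u_{(0)}(x)$ for every $x<b$ because $u_{(0)}$ is strictly increasing on $[a,b]$. The derived inequality $u'(x)\le u_{(0)}'(x)$ a.e.\ also fails in general: one may splice into $u$ an arc of a parabola with focus $(x_0,0)$ and parameter $q<p$, on which \eqref{above_zero} holds with equality while $u'=(x-x_0)/q>(x-x_0)/p=u_{(0)}'$. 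Note also the internal inconsistency: for absolutely continuous $u$, the claims $u\le u_{(0)}$, $u(b)=u_{(0)}(b)$ and $u'\le u_{(0)}'$ a.e.\ together force $u(x)=u(b)-\int_x^b u'(t)\,dt\ge u_{(0)}(b)-\int_x^b u_{(0)}'(t)\,dt=u_{(0)}(x)$, hence $u=u_{(0)}$. Moreover, if $u'\le u_{(0)}'$ a.e.\ were true, the lemma would be immediate: it gives $1/(1+u'^2)\ge 1/(1+u_{(0)}'^2)$ pointwise, which against $\varphi\ge 0$ is exactly the desired inequality. Your remark that this is ``the wrong direction'' is itself a sign error, and further sign errors occur in the Abel-summation identity (the correct one is $\int_a^b\varphi\,(g-g_{(0)})=\varphi(a)\int_a^b(g-g_{(0)})+\int_a^b\varphi'(x)\bigl(\int_x^b(g-g_{(0)})\bigr)dx$) and in the claim $\int_x^b(g_{(0)}-g)\ge 0$.

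The one sound idea here is the reduction, via correctly signed integration by parts against the non-decreasing weight, of the general statement to the case $\varphi\equiv 1$ on each right subinterval, i.e.\ to $\int_x^b \frac{dt}{1+u'^2(t)}\ge\int_x^b\frac{dt}{1+u_{(0)}'^2(t)}$ for every $x\in[a,b]$. But that base case carries essentially the full difficulty, and you give no valid proof of it. The paper's argument is of a different nature: it interpolates continuously between $u$ and $u_{(0)}$ by the family $u(\cdot,t)$ obtained by replacing $u$ on $[a,t]$ with the arc of the parabola with focus $(x_0,0)$ passing through $(t,u(t))$, and shows by an explicit computation that the weighted resistance $R(t)$ satisfies $R'(t)\le 0$ a.e. The constraint \eqref{above_zero} enters there only through the correct pointwise consequence $0\le u'(x)\le (x-x_0)/p(x)$, which bounds $u'$ by the slope of the focal parabola through the \emph{current} point $(x,u(x))$, not through the endpoint $(b,u(b))$.
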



\begin{proof}
Without loss of generality we assume that $x_0 = 0$; indeed, the general case is reduced to this one by the change of variables $x \mapsto x - x_0$.

It suffices to prove Lemma \ref{l3-0} for absolutely continuous functions $u$. Indeed, if $u$ is not absolutely continuous, substitute it with the function
$$
\tilde u(x) = u(b) - \int_x^b u'(\xi)\, d\xi.
$$
The resulting function $\tilde u$ is absolutely continuous. We have $\tilde u(b) = u(b)$, therefore the corresponding function $\tilde u_{(0)}$ coincides with $u_{(0)}$. Further, $\tilde u'(x) = u'(x)$ at each point where $u$ is differentiable; as a result we obtain
$$
\int_a^{b} \frac{\varphi(x)}{1 + \tilde u'^2(x)}\, dx = \int_a^{b} \frac{\varphi(x)}{1 + u'^2(x)}\, dx.
$$
Finally, we have $\tilde u(x) \ge u(x)$; therefore condition \eqref{above_zero} holds for $\tilde u$.

Suppose that the statement of Lemma \ref{l3-0} is true for absolutely continuous functions; then we have
$$
\int_a^{b} \frac{\varphi(x)}{1 + \tilde u_{(0)}'^2(x)}\, dx \le \int_a^{b} \frac{\varphi(x)}{1 + \tilde u'^2(x)}\, dx,
$$
and therefore this statement is also true for the functions $u$ and $u_{(0)}$.

Let now $u$ be absolutely continuous. Take the (unique) positive $p = p(x)$ such that
\begin{equation}\label{p}
u(x) = \frac{x^2 - p^2}{2p};
\end{equation}
it follows that
$$
\frac{-2u(x)}{x} = \frac{1 - (x/p)^2}{(x/p)},
$$
and taking into account \eqref{above_zero} (with $x_0 = 0$) one comes to the inequalities $u'(x) \le x/p$ and $x/p < 1$; hence
\begin{equation}\label{2}
0 \le u'(x) \le x/p < 1.
\end{equation}

Define the 1-parameter family of functions
\begin{equation}\label{1par}
u(x,t) = \left\{
\begin{array}{cc}
\frac{x^2 - p^2(t)}{2p(t)}, & \text{if } \, a \le x \le t\\
u(x), & \text{if } \, t \le x \le b
\end{array}
\right.,
\end{equation}
with $a \le t \le b$ (see Fig.~\ref{fig:par}). Using that $u(b,b) = u(b)$, one finds

$$
u(x,a) = u(x) \quad \text{and} \quad u(x,b) = u_{(0)}(x).
$$

\begin{center}
\centering
\parbox{13cm}
{\begin{picture}(0,145)
\rput(3,4){
\scalebox{0.8}{
 \psecurve[linewidth=0.3pt,linestyle=dashed,linecolor=blue]
 (-1,-4.95)(0,-5)(1,-4.95)(2,-4.8)(3,-4.55)(4,-4.2)(5,-3.75)(6,-3.2)(7,-2.55)(8,-1.8)(9,-0.95)(10,0)(11,1.05)
 \psdots[dotsize=2pt](10,0)(5,0)
\rput(10,0.35){\scalebox{1.2}{$b$}}
\rput(5,0.35){\scalebox{1.2}{$a$}}
\rput(-0.3,0.05){\scalebox{1.2}{0}}
\rput(6.9,0.35){\scalebox{1.2}{$t$}}
\rput(11.5,0){\scalebox{1.2}{$x$}}
\psline[linewidth=0.6pt,linestyle=dotted](6.95,0)(6.95,-1.81)
\psline[linewidth=0.4pt,arrows=->,arrowscale=1.5](0,-5.3)(0,1)
\psline[linewidth=0.4pt,arrows=->,arrowscale=1.5](0,0)(11,0)
\psline[linewidth=0.6pt,linestyle=dotted](5,0)(5,-3.75)
   \pscurve[linewidth=1pt,linecolor=red]
   (6.95,-1.81)(8,-1.29)(9,-0.75)(9.5,-0.47)
   \pscurve[linewidth=0.9pt,linestyle=dashed,linecolor=red]
   (5,-2.625)(6,-2.24)(6.95,-1.81)
\scalebox{0.5}{
  \rput(-0.15,0){\psecurve[linewidth=0.6pt,linestyle=dashed,linecolor=blue]
 (-1,-4.95)(0,-5)(1,-4.95)(2,-4.8)(3,-4.55)(4,-4.2)(5,-3.75)(6,-3.2)(7,-2.55)(8,-1.8)(9,-0.95)(10,0)(11,1.05)
 }}
 \scalebox{0.6}{
  \rput(-0.5,0){\psecurve[linewidth=0.5pt,linestyle=dashed,linecolor=blue]
 (-1,-4.95)(0,-5)(1,-4.95)(2,-4.8)(3,-4.55)(4,-4.2)(5,-3.75)(6,-3.2)(7,-2.55)(8,-1.8)(9,-0.95)(10,0)(11,1.05)
 }}
\scalebox{0.7}{
  \rput(-0.75,0){\psecurve[linewidth=0.43pt,linestyle=dashed,linecolor=blue]
 (-1,-4.95)(0,-5)(1,-4.95)(2,-4.8)(3,-4.55)(4,-4.2)(5,-3.75)(6,-3.2)(7,-2.55)(8,-1.8)(9,-0.95)(10,0)(11,1.05)
 }}
\scalebox{0.8}{
  \rput(-0.95,0){\psecurve[linewidth=0.375pt,linestyle=dashed,linecolor=blue]
 (-1,-4.95)(0,-5)(1,-4.95)(2,-4.8)(3,-4.55)(4,-4.2)(5,-3.75)(6,-3.2)(7,-2.55)(8,-1.8)(9,-0.95)(10,0)(11,1.05)
 }}
\scalebox{0.9}{
  \rput(-1.15,0){\psecurve[linewidth=0.375pt,linestyle=dashed,linecolor=blue]
 (-1,-4.95)(0,-5)(1,-4.95)(2,-4.8)(3,-4.55)(4,-4.2)(5,-3.75)(6,-3.2)(7,-2.55)(8,-1.8)(9,-0.95)(10,0)(11,1.05)
 \psecurve[linewidth=1pt,linecolor=red](5,-3.75)(5.55,-3.46)(6,-3.2)(7,-2.55)(7.7,-2.0355)(8,-1.8)
 }}
}}
\end{picture}}
\vskip 0.4cm
\figure \label{fig:par}
The family of functions $u(x,t)$.
\end{center}

Denote
$$
R(t) = \int_{a}^b \frac{\varphi(x)}{1 + u'_x(x,t)^2}\, dx, \quad a \le t \le b.
$$
We have
$$
R(a) = \int_{a}^b \frac{\varphi(x)}{1 + u'^2(x)}\, dx \quad \text{and} \quad R(b) = \int_{a}^b \frac{\varphi(x)}{1 + u_{(0)}'^2(x)}\, dx.
$$
To prove the lemma, it suffices to show that $R(a) \ge R(b)$.

One has
$$
u'_x(x,t) = \left\{
\begin{array}{cc}
x/p(t), & \text{if } \, a \le x \le t\\
u'(x), & \text{if } \, t \le x \le b
\end{array}
\right.;
$$
hence
\begin{equation}\label{eqR}
R(t) = \int_{a}^t \frac{\varphi(x)}{1 + x^2/p^2(t)}\, dx + \int_t^b \frac{\varphi(x)}{1 + u'^2(x)}\, dx.
\end{equation}
We have
$$
R'(t) = -\frac{\varphi(t)}{1 + u'^2(t)} + \frac{\varphi(t)}{1 + t^2/p^2(t)} + 2p(t)p'(t) \int_a^t \frac{x^2 \varphi(x)}{(x^2 + p^2(t))^2}.
$$
One has
\begin{equation}\label{eqP}
p(t) = -u(t) + \sqrt{t^2 + u^2(t)};
\end{equation}
hence
$$
p' = -u' + \frac{t + u'u}{\sqrt{t^2 + u^2}}.
$$

Since the function $u$ is absolutely continuous, so also is the function $p$ \eqref{eqP}, and therefore so is the first term in \eqref{eqR}. Further, the second term in \eqref{eqR} is absolutely continuous as a primitive of the bounded measurable function $1/(1 + u'^2(x))$. Therefore the function $R$ is absolutely continuous, and by the Newton-Leibniz formula we have
$$R(b) - R(a) = \int_a^b R'(t)\, dt.$$
Thus, it suffices to prove that $R'(t) \le 0$ for almost all $a < t < b$.

Introducing $u'(t) = w$ and $t/p(t) = \tau$, one obtains (in what follows we put $p = p(t)$ and $p' = p'(t)$)
$$
p' = \frac{t - wp}{p + u} = 2p\, \frac{t - wp}{t^2 + p^2} 
= 2\, \frac{\tau - w}{1 + \tau^2} \quad\quad (0 \le w \le \tau \le 1)
$$
and
\begin{equation}\label{3}
-\frac{R'(t)}{\varphi(t)} = \left( \frac{1}{1 + w^2} - \frac{1}{1 + \tau^2} \right) + 4\, \frac{w - \tau}{1 + \tau^2} \int_{a/p}^{\tau} \frac{\xi^2}{(1 + \xi^2)^2}\, \frac{\varphi(p\xi)}{\varphi(p\tau)}\, d\xi.
\end{equation}
Denote the expression in the right hand side of \eqref{3} by $\Phi(w) = \Phi(w,\tau,p).$ To complete the proof of the lemma, one needs to show that $\Phi(w) \ge 0$ for all $w \in [0,\, \tau]$.

Substituting $w = \tau$, one obtains $\Phi(w=\tau) = 0$. Substituting $w = 0$, one has
\begin{equation}\label{7}
\frac{1 + \tau^2}{2\tau}\, \Phi(w=0) = \frac{\tau}{2} - 2 \int_{a/p}^{\tau} \frac{\xi^2}{(1 + \xi^2)^2}\, \frac{\varphi(p\xi)}{\varphi(p\tau)}\, d\xi.
\end{equation}
One obviously has
$$
0 < \frac{a}{p} \le \tau \le 1.
$$
Substituting $\tau = a/p$, we see that the expression in the right hand side of \eqref{7} takes the positive value $\tau/2$. Further, the derivative of this expression by $\tau$ is equal to the non-negative value
$$
\frac{(1 - \tau^2)^2}{2(1 + \tau^2)^2} + \frac{2p\varphi'(p\tau)}{\varphi^2(p\tau)} \int_{a/p}^{\tau} \frac{\xi^2}{(1 + \xi^2)^2}\, \varphi(p\xi)\, d\xi.
$$
Thus, we conclude that $\Phi(w=0) > 0$.

\begin{center}
	\centering
\parbox{13cm}
{\begin{picture}(0,140)
\rput(6,0.5){
\scalebox{1.7}{
\psecurve[linewidth=0.4pt]
(-5,0.276)(-4,0.4)(-3,0.6154)(-2,1)(-1.5,1.28)(-1,1.6)(-0.5,1.8824)(0,2)(0.5,1.8824)(1,1.6)(1.5,1.28)(2,1)(3,0.6154)(4,0.4)(5,0.276)
\psline[linewidth=0.4pt,arrows=->,arrowscale=1.2](-4,0)(4.5,0)
\rput(4.5,-0.15){\scalebox{0.6}{$w$}}
\rput(2,-0.15){\scalebox{0.6}{1}}
\rput(0,-0.15){\scalebox{0.6}{0}}
\rput(1.5,-0.15){\scalebox{0.6}{$\tau$}}
\rput(-0.12,2.15){\scalebox{0.6}{1}}
\rput(-0.12,1.8){\scalebox{0.6}{$\bbb$}}
\psline[linewidth=0.4pt,arrows=->,arrowscale=1.2](0,0)(0,2.5)
\psline[linewidth=0.2pt,linestyle=dashed,dash=2pt](0,2)(2,0.99)
\psline[linewidth=0.2pt,linestyle=dashed,dash=2pt](1.5,1.28)(1.5,0)
\psdots[dotsize=1.5pt](0,2)(2,1)(2,0)
\psline[linewidth=0.4pt](0,1.8)(1.5,1.28)
\rput(-2.7,0.95){\scalebox{0.6}{$\Phi_1(w)$}}
\rput(0.55,1.35){\scalebox{0.6}{$\Phi_2(w)$}}
}}
\end{picture}}

\figure \label{fig:2f}
The functions $\Phi_1(w) = 1/(1 + w^2)$ and $\Phi_2(w) = kw + \bbb$ on the interval $[0,\, \tau]$.
\end{center}

Notice that $\Phi$ can be represented as the difference of two functions, $\Phi(w) = \Phi_1(w) - \Phi_2(w)$, where $\Phi_1(w) = 1/(1 + w^2)$ and $\Phi_2(w) = kw + \bbb$, with $k = k(\tau,p)$ and $\bbb = \bbb(\tau,p)$ (see Fig.~\ref{fig:2f}). Since $\Phi_1(\tau) = \Phi_2(\tau)$ and $\Phi_2(0) < \Phi_1(0) = 1$, one has
$$
\Phi_2'(\tau) = \frac{\Phi_2(\tau) - \Phi_2(0)}{\tau} > \frac{\Phi_1(\tau) - 1}{\tau} = -\frac{\tau}{1 + \tau^2}.
$$
On the other hand, using that $0 < \tau < 1$, one obtains
$$
\Phi_1'(\tau) = -\frac{2\tau}{(1 + \tau^2)^2} < -\frac{\tau}{1 + \tau^2} < \Phi_2'(\tau).
$$
It follows that $\Phi_1(w) > \Phi_2(w)$ in a left neighborhood of $\tau$.

Assume that $\Phi_1(w) < \Phi_2(w)$ at a point $w \in (0,\, \tau)$; then there exist at least two points $w' \in (0,\, w)$ and $w'' \in (w,\, \tau)$ where $\Phi_1 = \Phi_2$. Thus, first, $\Phi_2' = k < 0$ and second, there exist three points $w_1 \in (0,\, w')$,\, $w_2 \in (w',\, w'')$, and $w_3 \in (w'',\, \tau)$ where the equation $\Phi_1' = \Phi_2'$ is true. However, the function $\Phi_1'$ first decreases and then increases on $(0,\, +\infty)$, and therefore takes a fixed value in at most two points. This contradiction proves that $\Phi_1(w) \ge \Phi_2(w)$, and so, $\Phi(w) \ge 0$ for $w \in (0,\, \tau)$. Lemma \ref{l3-0} is proved.
\end{proof}

For any even admissible function $u$ and $\ve > 0$ one can, using Corollary \ref{cor}, find a piecewise linear even admissible function $\hat u$ such that
$\FFF(\hat u) < \FFF(u) + \ve$. Further we use Lemma \ref{l2} to find a convex even admissible function $\tilde u$ continuous at $x=1$ and such that $\FFF(\tilde u) \le \FFF(\hat u)$. Next applying Lemma \ref{l3-0} to the restriction of $\tilde u$ on $[0,\, 1]$ with $\varphi = f$ and $x_0 = -1$, and taking account of Remark \ref{zam6} we obtain
$$
\int_0^1 \frac{f(x)}{1 + u_0'^2(x)}\, dx \le \int_0^1 \frac{f(x)}{1 + \tilde u'^2(x)}\, dx,
$$
and using that $f,\, \tilde u$ and $u_0$ are even, we conclude that $\FFF(u_0) \le \FFF(\tilde u)$. Thus, $\FFF(u_0) < \FFF(u) + \ve$. Taking $\ve$ arbitrary small, one has $\FFF(u_0) \le \FFF(u)$. Theorem \ref{t1} is proved.

\section*{Appendix}

Let $\Om$ be a bounded domain in $\RRR^d$,\, $d \ge 3$. First consider the case where $\Om$ is a cartesian product $\Om = \Om_1 \times \Om_2$, where $\Om_1 \subset \RRR^2$ and $\Om_2 \subset \RRR^{d-2}$. For a two-dimensional domain $\Om_1$ it is known \cite{PkakSIAM} that $\inf_u F(u; \Om_1) = 1/2$; therefore for any $\ve > 0$ there is an admissible function $u_1$ on $\bar\Om_1$ such that $F(u_1; \Om_1) < 1/2 + \ve$.

Define the function $u$ on $\bar\Om$ by $u(x) = u_1(x_1)$, if $x = (x_1, x_2) \in \Om$\, $(x_1 \in \RRR^2,\, x_2 \in \RRR^{d-2})$ and $u(x) = 0$, if $x \in \pl\Om$. It is easy to check that $u$ satisfies SIC, and therefore is admissible. One has $|\nabla u(x)| = |\nabla u_1(x_1)|$, and so,
\begin{multline*}
F(u; \Om) = \frac{1}{|\Om_1 \times \Om_2|} \int_{\Om_1 \times \Om_2} \frac{dx}{1 + |\nabla u(x)|^2}
= \frac{1}{|\Om_1|} \int_{\Om_1} \frac{dx_1}{1 + |\nabla u_1(x_1)|^2} < 1/2 + \ve.
\end{multline*}
Since one always has $F(u; \Om) > 1/2$, we conclude that $\inf_u F(u; \Om) = 1/2$.

Let now $\Om$ be an arbitrary domain in $\RRR^d$. Take a cubic lattice $(\del\ZZZ)^d$ and denote by $\Om_1,\ldots,\Om_n$ the cubes of the lattice that are contained in $\Om$. Choose $\del > 0$ so small that the volume $|\Om \setminus (\cup_{i=1}^n \Om_i)|$ is smaller than $\frac{\ve}{2}\, |\Om|$.

Since each cube $\Om_i$ is the cartesian product of a two-dimensional square and a $(d-2)$-dimensional cube, we have $\inf_u F(u; \Om_i) = 1/2$ and one can choose an admissible function $u_i$ on $\bar\Om_i$ such that $F(u_i; \Om_i) < 1/2 + \ve/2$. Define the function $u$ on $\bar\Om$ as follows: the restriction $u\rfloor_{\Om_i}$ coincides with $u_i$ for each $i$;\, $u\rfloor_{\pl\Om} = 0$; the restriction $u\rfloor_{\Om\setminus(\cup_{i}\Om_i)}$ coincides with a negative constant. The so-defined function $u$ satisfies SIC, and therefore is admissible. We have
$$
F(u; \Om) = \frac{1}{|\Om|} \left( \sum_{i=1}^n \int_{\Om_i} \frac{dx}{1 + |\nabla u_i(x)|^2} + \int_{\Om\setminus(\cup_{i}\Om_i)} dx \right)
$$
$$
= \sum_{i=1}^n \frac{|\Om_i|}{|\Om|}\, F(u_i; \Om_i) + \frac{1}{|\Om|} \cdot |\Om\setminus(\cup_{i=1}^n \Om_i)|
$$
$$
< (1/2 + \ve/2) + \ve/2 = 1/2 + \ve.
$$
Due to arbitrariness of $\ve > 0$, we conclude that $\inf_u F(u; \Om) = 1/2$.

\section*{Acknowledgements}
The work of AP was supported by Portuguese funds through CIDMA -- Center for Research and Development in Mathematics and Applications and FCT -- Portuguese Foundation for Science and Technology, within the project PEst-OE/MAT/UI4106/2014, as well as by the FCT research project PTDC/MAT/113470/2009.

\end{document}